\documentclass[12pt,reqno]{amsart}
\usepackage{amsfonts,amssymb,amscd,amsmath,enumerate,verbatim,calc}

\usepackage{tikz}
\usepackage{graphics}

\headsep=1cm \numberwithin{equation}{section}
\tikzstyle{vertex}=[circle, draw, inner sep=0pt, minimum size=4pt]

\tikzstyle{vertex}=[circle, draw, inner sep=0pt, minimum size=4pt] 

\newtheorem{theorem}{Theorem}[section]
\newtheorem{lemma}[theorem]{Lemma}
\newtheorem{proposition}[theorem]{Proposition}
\newtheorem{conjecture}[theorem]{Conjecture}

\theoremstyle{definition}
\newtheorem{definition}[theorem]{Definition}
\theoremstyle{remark}
\newtheorem{remark}[theorem]{Remark}
\newtheorem{example}[theorem]{Example}

\begin{document}

\title[]{On the Regularity of Dominant and Almost Complete Intersection Monomial Ideals}

\author{Amir Mafi}
\address{Department of Mathematics, University Of Kurdistan, P.O. Box: 416, Sanandaj, Iran.}

\email{a\_mafi@ipm.ir}

\author{Rando Rasul Qadir}
\address{Department of Mathematics, University Of Kurdistan, P.O. Box: 416, Sanandaj, Iran.}
\email{rando.qadir@univsul.edu.iq}

\subjclass[2020]{ 13B22, 13D02, 13F20, 13F55}

\keywords{Castelnuovo-Mumford regularity, almost complete intersection, dominant, integral closure}

\begin{abstract} 
Let $R = k[x_1,\ldots,x_n]$ be a polynomial ring in $n$ variables over a field $k$, and let $I$ be a monomial ideal of $R$. If $I$ is an almost complete intersection, then we provide an explicit formula for the Castelnuovo-Mumford regularity of $I$ in terms of the powers of the dominant variables appearing in the regular sequence contained in $G(I)$ of length $|G(I)|-1$, where $G(I)$ is the set of minimal monomial generators of $I$. Furthermore, if $I$ is a dominant ideal or an almost complete intersection ideal, then we show that $\operatorname{reg}(\overline{I}) \leq \operatorname{reg}(I),$ where $\overline{I}$ denotes the integral closure of $I$. This provides a positive answer to the Küronya-Pintye conjecture for these two classes of monomial ideals. In addition, we give some examples to clarify these results. 
\end{abstract}

\maketitle

\section*{Introduction}
Throughout this paper, we assume that $R=k[x_1,\ldots,x_n]$ is the polynomial ring in $n$ variables over a field $k$ and $I$ is a monomial ideal of $R$. We denote the minimal set of generators of $I$ by $G(I)$ and $\mu(I)=|G(I)|$. 

This paper is motivated by the following conjecture, which was proposed in \cite{R10}:
\begin{conjecture}\label{conj}
If $I$ is a homoginuous ideal in the polynomial ring $R$. Then $\operatorname{reg}(\bar{I})\leq \operatorname{reg}(I)$, where $\bar{I}$ and $\operatorname{reg}(I)$ stand for the integral closure and Castelnuovo-Mumford regularity of the ideal $I$ respectively.
\end{conjecture}

This conjecture was recently established for $n=2,3$ in \cite{R3,R11}. Furthermore, in \cite{R3}, 
it was shown that the conjecture holds whenever $I$ is a complete intersection monomial ideal, a stable monomial ideal, a Gorenstein ideal of height $3$, or $\mathfrak{m}$-primary monomial ideal, where $\mathfrak{m}=(x_{1},\ldots,x_{n})$. On the other hand, the author of \cite{R4} recently constructed a counterexample demonstrating that conjecture generally has a negative answer for $n=4$. Nevertheless, it is of interest to determine additional classes of monomial ideals for which the conjecture holds. This is partly motivated by the fact that the Castelnuovo–Mumford regularity is a fundamental homological invariant in both computational and algebraic geometry, see for example \cite{R1, R13, R12, R14}. Moreover, computing the Castelnuovo–Mumford regularity is generally a difficult problem and typically requires sophisticated homological techniques.

Consider the $\mathbb{Z}$-graded finitely generated $R$-module $M$. Then the total Betti number of $M$ in homological degree $i$ is denoted by $\beta_{i}(M)$ and defined as $\dim_{k}Tor_{i}^{R}(M,k)$. The Betti number of $M$ in the homological degree $i$ and the external degree $j$ is denoted by $\beta_{i,j}(M)$ and defined as $\dim_{k}Tor_{i}^{R}(M,k)_{j}$. These graded Betti numbers appear in the graded minimal free resolution of $M$, which is a complex that has the following representation:

\begin{equation*}
0\to \bigoplus_{j\in\mathbb{N}}R(-j)^{\beta_{s,j}(M)}\to\cdots\to \bigoplus_{j\in\mathbb{N}}R(-j)^{\beta_{1,j}(M)}\to R\to M\to 0
\end{equation*}
such that $R(-j)$ refers to the graded free $R$-module which is generated by an element of degree $j$. For further background on chain complexes, free resolutions, and related topics, we refer the reader to \cite{R5,R4}.
The Castelnuovo-Mumford regularity of $M$ is defined as follows:
\begin{equation*}
\operatorname{reg}(M)=\operatorname{max}\{j-i:\beta_{i,j}(M)\neq 0\}.
\end{equation*}

Now, we recall the integral closure of a monomial ideal; for more details, see \cite{R15,R4}.  Suppose $I=(u_{1},\cdots,u_{q})$ is a monomial ideal in $R$ and $a_{i}\in \mathbb{N}^{n}$ is the exponent vector of the generator $u_{i}$ of $I$. We denote the set of convex combinations of the exponent vectors of the monomial ideal $I$ by $\operatorname{conv}(a_{1},\ldots,a_{q})$. The Newoton's polyhedron set of the monomial ideal $I$ is denoted by $NP(I)$ which is the convex hull of all the exponent vectors $a_{1},\ldots,a_{q}$. The extreme points of the Newton's polyhedron set of the monomial ideal $I$ is the minimal set $E(I)\subseteq G(I)$ with $NP(I)=\operatorname{conv}(E(I))+\epsilon$ such that $\epsilon\in \mathbb{R}_{\geq 0}^{n}$. According to \cite[Proposition 12.1.4]{R4}, we have $\bar{I}=\left(x^{\lceil a\rceil}|\text{ } a\in \operatorname{conv}(a_{1},\ldots,a_{q})\right)=\left(u\text{ }|\text{ }u^{k} \in I^{k},\text{ for some positive integer } k\geq 1\right)$.

The main aim of this paper is to study the Castelnuovo-Mumford regularity for almost complete intersection monomial ideals and we show that Conjecture \ref{conj} is true for the dominant and the class of almost complete intersection monomial ideals. Furthermore, we give some examples to clarify these results. For unexplained terminology and background material, we refer the reader to  \cite{R5,R15,R4}.

\section{The results}

In this section, we investigate the regularity of an almost complete intersection monomial ideal. First of all, we recall some basic definitions and results in this area. A monomial ideal $I$ of $R$ is said to be almost complete intersection (resp. complete intersection), if its minimal set of generating has the following form $G(I)=\{u_{1},...,u_{q},u_{q+1}\}$, where $u_{1},...,u_{q}$ (resp. $u_{1},...,u_{q},u_{q+1}$)  form a regular sequence. The following definitions were introduced in \cite{R1}.

\begin{definition}
A monomial ideal $ I \subseteq R $ is called a dominant ideal if its minimal generating set of monomials, $ G(I) $, satisfies the condition that for each monomial $ u \in G(I) $, there is at least one variable $ x_j $ such that the exponent of $ x_j $ in $ u $ is strictly greater than the exponent of $ x_j $ in every other monomial in $G(I)$.
\end{definition}

\begin{definition}
A monomial ideal $ I $ in the ring $ R $ is called semidominant if among generators, there is a monomial that does not have a variable with an exponent strictly greater than the corresponding exponent in all other generators.
\end{definition}

For example, $I=(x_1^3x_2,x_2^2x_3,x_3x_4)$ and $J=(x_1^2x_2^2,x_2^2x_3^2,x_1x_3)$ are dominant and semidominant ideals, respectively. In addition, the ideal $L=(x_1^2x_2^2,x_1^2x_3^2,x_2^2x_3^2)$ is neither dominant nor semidominant.

Suppose that $I=(u_{1},...,u_{q})$ is a monomial ideal in the ring $R$. Then the Taylor resolution of $R/I$ is a chain complex of the form
\begin{eqnarray}
\mathbb{T}_I: 0\to F_{q}\to F_{q-1}\to\ldots\to F_{2}\to F_{1}\to F_{0}\to R/I\to 0,
\end{eqnarray}
where $F_{i}=\oplus_{j\in \mathbb{Z}}F_{ij}$ is a free $R$-module in homological degree $i$ and internal degree $j$ and the differential map $d_{i}:F_{i}\to F_{i-1}$ is defined as follows:
\begin{equation}
d_{i}(e_{j_{1}}\land...\land e_{j_{i}})=\sum_{1\leq p \leq i}(-1)^{p-1}\frac{\operatorname{lcm}(u_{j_{1}},\ldots,u_{j_{i}})}{\operatorname{lcm}(u_{j_{1}},\ldots,\hat{u}_{j_{p}},\ldots,u_{j_{i}})} e_{j_{1}}\land\ldots \hat{e}_{j_{p}}\ldots\land e_{j_{i}},
\end{equation}
for each basis element $e_{j_{1}}\land\ldots\land e_{j_{i}}$ of the free $R$-module $F_{i}$ in the homological degree $i$ and the internal degree $\operatorname{lcm}(u_{j_{1}},\ldots,u_{j_{i}})$. The notations $\hat{u}_{j_{p}}$ and $\hat{e}_{j_{p}}$ referring to $u_{j_{p}}$ and $e_{j_{p}}$ are omitted  in $\operatorname{lcm}(u_{j_{1}},\ldots,u_{j_{i}})$ and the base element $e_{j_{1}}\land\ldots\land e_{j_{i}}$, respectively (see for more details \cite{R5}).

In the following theorem, we recall the construction of Lyubznik's resolution. Lyubznik in \cite{R7} provided that for any arrangement of the generators of a given monomial ideal, there is a finite free resolution, which contains a minimal free resolution of the monomial ideal and it is more accurate compared with the Taylor resolution. More details and examples on Lyubznik's resolution can be found in \cite[Section 6]{R8}.

\begin{theorem}[Lyubznik]\label{lyubznik}
If  $I=(u_{1},...,u_{q})$ is a monomial ideal in the polynomial ring $R$. Then there is a free resolution $L$ of $R/I$, for which it is a subcomplex of the Taylor complex $\mathbb{T}_{I}$ and it is generated in every dimension $j$ by elements $e_{1}\land\cdots\land e_{i_{j}}$  such that $u_{r}$ does not divide $\operatorname{lcm}(u_{i_{t}}, u_{i_{t+1} },\cdots,u_{i_{j }})$ for all $t < j$ and $r < i_{t}$.
\end{theorem}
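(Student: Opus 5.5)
The plan follows Lyubeznik's original argument: fix the ordering $u_1,\ldots,u_q$ and let $L$ be the $R$-submodule of $\mathbb{T}_I$ spanned by the basis elements $e_{i_1}\wedge\cdots\wedge e_{i_j}$ (with $i_1<\cdots<i_j$) that are \emph{L-admissible}. Admissible means that for every $t<j$ and every $r<i_t$, the generator $u_r$ does not divide $\operatorname{lcm}(u_{i_t},\ldots,u_{i_j})$. There are two things to show: that $L$ is a subcomplex, and that it is acyclic with $H_0=R/I$.

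\textbf{Step 1: $L$ is a subcomplex.} Let $\sigma=\{i_1<\cdots<i_j\}$ be admissible and delete an index $i_p$. Every tail $\{i_t,\ldots,i_j\}\setminus\{i_p\}$ of the new set is contained in a tail of $\sigma$ whose smallest index is at most the new tail's smallest index. Its lcm therefore divides the larger tail's lcm. If some $u_r$ divided the smaller lcm with $r$ below the new starting index, it would divide the larger lcm with $r$ below that tail's start, contradicting admissibility. Since the Taylor differential only deletes indices, $d(L)\subseteq L$.

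\textbf{Step 2: exactness via a multidegree-wise contraction.} Taylor differentials are multigraded, so it suffices to prove exactness in each multidegree $m$. There, $(\mathbb{T}_I)_m$ is the simplicial chain complex (augmented, with degree shift) of the full simplex $\Delta_m$ on $\{i : u_i \mid m\}$. Correspondingly, $L_m$ is the chain complex of the subcomplex $\Lambda_m\subseteq\Delta_m$ of admissible faces whose lcm divides $m$. I would show $\Lambda_m$ is a cone, or empty, as follows:
\begin{itemize}
\item If $\Delta_m$ is nonempty, let $i_0=\min\{i : u_i\mid m\}$.
\item The minimum index of an admissible $\sigma$ is not constrained from below by the tail at $t=1$. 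The definition only requires $t<j$, so the full set is exempt, but the tail starting at $i_2$ must avoid divisibility by all $u_r$ with $r<i_2$.
\item Claim: if $\sigma\in\Lambda_m$ with $i_0\notin\sigma$, then $\sigma\cup\{i_0\}$ is again admissible. Every tail of the new set that begins at an old index is an old tail, plus the old full set, which now becomes a proper tail. Here lies a subtlety: the old full set $\sigma$ was not itself required to avoid divisibility, and in fact $u_{i_0}$ divides $m$ but need not divide $\operatorname{lcm}(\sigma)$. So I need the condition ``no $u_r$ with $r<\min\sigma$ divides $\operatorname{lcm}(\sigma)$'' for the new tail.
\item The correct pairing is therefore this: faces $\sigma$ not containing $i_0$ with $u_{i_0}\nmid\operatorname{lcm}(\sigma)$, and more generally with no smaller $u_r$ dividing $\operatorname{lcm}(\sigma)$, are matched with $\sigma\cup\{i_0\}$. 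Other faces either contain $i_0$ or have some $u_r$, $r<\min\sigma$, dividing their lcm.
\end{itemize}
I would implement this as an algebraic discrete Morse matching (equivalently, a direct chain homotopy $h(e_\sigma)=\pm e_{\{i_0\}\cup\sigma}$ when admissible). The goal is to check $dh+hd=\mathrm{id}$ on $L_m$ in positive degrees, so that $\tilde H(\Lambda_m)=0$.

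\textbf{Main obstacle.} The delicate step is verifying that the homotopy preserves $L$ and that unmatched cells cancel. In particular, faces $\sigma$ whose lcm is divisible by some $u_r$ with $r<\min\sigma$ cannot simply be extended. If this gets messy, my fallback is the standard route of Lyubeznik and Novik: show that $\Lambda_m$ is obtained from $\Delta_m$ by a sequence of elementary collapses. These are performed in decreasing order of the ``first violating'' pair $(t,r)$, each time pairing $\sigma$ with $\sigma\triangle\{r\}$. Since collapses preserve homotopy type and $\Delta_m$ is contractible, $\Lambda_m$ is acyclic. Finally, $H_0(L)=R/I$ because every singleton $\{i\}$ is vacuously admissible, so $L_1=F_1$ and $L_0=F_0$.
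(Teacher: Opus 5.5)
The paper does not prove this theorem; it quotes it from \cite{R7} (see also \cite{R8}). The standard proof is the strategy you begin with: pass to a multidegree $m$, identify $L_m$ with the augmented chain complex of $\Lambda_m$, and show that $\Lambda_m$ is a cone with apex $i_0=\min\{i: u_i\mid m\}$. Your Step 1 and this reduction are fine.

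\textbf{The gap.} Step 2 rests on a misreading of the definition, and as a result exactness is never actually proved. The restriction $t<j$ excludes only $t=j$, i.e.\ the final singleton tail $\{i_j\}$. It does not exempt the full set. For $|\sigma|\ge 2$, the case $t=1$ is part of the definition and says precisely that no $u_r$ with $r<i_1=\min\sigma$ divides $\operatorname{lcm}(\sigma)$.

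Believing the full set exempt, you abandon your own correct claim. You replace it with a partial pairing that leaves unmatched the faces $\sigma\not\ni i_0$ whose lcm is divisible by some $u_r$ with $r<\min\sigma$. You then leave $dh+hd=\mathrm{id}$, or the collapse sequence, as an unresolved ``main obstacle''.

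The misreading is not cosmetic: if the full set really were exempt, the theorem would be false. For $I=(xy,xz,yz)$, all three edges would be admissible but the triangle would not (since $u_1=xy\mid\operatorname{lcm}(u_2,u_3)$). In multidegree $xyz$ one would get $\tilde H_1(\Lambda_{xyz};k)\neq 0$, and no matching could repair that.

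\textbf{The fix.} With the correct reading, the claim you wrote down holds outright. Suppose $\sigma=\{i_1<\cdots<i_j\}\in\Lambda_m$ and $i_0\notin\sigma$. Then $i_0<i_1$, and in $\sigma\cup\{i_0\}$ every tail is checked as follows.
\begin{enumerate}
\item The full tail is fine, because any $u_r$ dividing its lcm divides $m$, which forces $r\ge i_0$.
\item The tail starting at $i_1$ is $\sigma$ itself. When $j\ge 2$ it is covered by the $t=1$ condition for $\sigma$; when $j=1$ it is the exempt final singleton.
\item Every other constrained tail is a constrained tail of $\sigma$.
\end{enumerate}
Together with Step 1, this shows that for $m\in I$ the complex $\Lambda_m$ is a cone with apex $i_0$, so its reduced homology vanishes. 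For $m\notin I$, $\Lambda_m=\{\emptyset\}$. Hence $H_i(L)_m=\tilde H_{i-1}(\Lambda_m;k)=0$ for $i\ge 1$, and $H_0(L)=R/I$ in every multidegree. No chain homotopy, Morse matching or collapsing argument is needed.
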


In this section, we first recall the Scarf complex of a monomial ideal $I=(u_{1},\ldots,u_{q})$. Let $\Gamma_{I}=\{\sigma\subseteq G(I)| \operatorname{lcm}(\sigma)\neq \operatorname{lcm}(\tau),\text{ for every }\sigma\neq\tau\subseteq G(I)\}$. $\Gamma_{I}$ is said to be a Scarf complex. The simplicial chain complex induced by $\Gamma_{I}$ is indicated by $\mathbb{F}_{\Gamma_{I}}$ and is called the Scarf chain complex. The basis elements of the chain complex $\mathbb{F}_{\Gamma_{I}}$ in homological degree $i$ are of the form $e_{j_{1}}\land\ldots\land e_{j_{i}}$, where $u_{j_{1}},\ldots, u_{j_{i}}$ is a face in $\Gamma_{I}$. This complex is introduced and investigated by Scarf \cite{R17} and after that Bayer, Peeva and Sturmfels reintroduced it as Scarf complex in \cite{R16}. They provided that the Scarf complex always contains in minimal free resolution of a monomial ideal. Furthermore, they investigated some special types of monomial ideal such that the Scarf complex forms a minimal free resolution. Alesandroni in \cite{R1} provided that the dominant and semidominant monomial ideals are of those classes of ideals for which the scarf complex forms a minimal free resolution for them. 

According to \cite[Theorem 4.8]{R1}, for a dominant monomial ideal $I$ of $R$ with the minimal generating set $G(I)=\{u_{1},\ldots,u_{q}\}$. If $h=\operatorname{deg}(\operatorname{lcm}(u_{1},\ldots,u_{q}))$, then $\operatorname{reg}(R/I)=h-q$ and it is obvious,
\begin{align}
\operatorname{deg}(\operatorname{lcm}(u_{1},...,u_{q}))&=\sum a_{i}, \text{ where }a_{i}=\max\{\deg_{x_{i}}u: u\in G(I)\}.
\end{align}

Consequently, 
\begin{equation}
\label{regularity dominant}
\operatorname{reg}(I)=\sum_{i=1}^q a_{i}-q+1.
\end{equation}
On the other hand, if $I$ is a semidominant monomial ideal in $R$ with minimal generating set $G(I)=\{u_1,\ldots,u_q,v\}$ and $u_{1},\ldots,u_{q}$ form a dominant set and $v$ is nondominant monomial, then by \cite[Corollary 6.10]{R1}, we obtain that

\begin{equation}\label{eq3}
\operatorname{reg}(R/I)=\max\{\operatorname{deg}(\operatorname{lcm}(\sigma))-|\sigma|:\sigma\subset G(I), v\in\sigma\quad and\quad \sigma\quad is\quad dominant\}.
\end{equation}

\begin{remark}\label{remark1.4}

From \cite[Proposition 6.3]{R1}, we see that the almost complete intersection monomial ideal is dominant or semidominant. Let $I=(u_1,\ldots,u_q,u_{q+1})$ be an almost complete intersection monomial ideal. Then $I$ has one of the following form:
\begin{itemize}
    \item $I$ is a dominant monomial ideal.
    \item there exist $2 \leq s \leq q$ with $u_{q+1} \mid \operatorname{lcm}\left(u_1, \ldots, u_s\right)$ and $u_{q+1} \nmid \operatorname{lcm}(\sigma)$ for any $\sigma \subset\left\{u_1, \ldots, u_s\right\}$ and $|\sigma|<s$.
\end{itemize}

If $I$ satisfies the first case. From \cite[Corollary 4.9]{R1}, we have $\beta_{i}(R/I)=\binom{q+1}{i}$. If $I$ satisfies the second case, then by \cite[Corollary 6.4]{R1}, we obtain the Scarf resolution is the minimal free resolution of $I$. That means; the minimal free resolution of $I$ can be obtained from the Taylor resolution $\mathbb{T}_{I}$, by standard cancellation of those basis elements of $\mathbb{T}_{I}$ contains $\{u_{1}, \ldots, u_{s}\}$. From \cite[Theorem 1.14]{R6}, we have $\beta_{i}(R/I) = \binom{q+1}{i}$ for $i=0,1,\ldots,s-1$ and $\beta_{i}(R/I) = \binom{q+1}{q+1-i} - \binom{q+1-s}{q+1-i}$ for $i=s,\ldots,q$. Hence, the graded Betti numbers in the graded minimal free resolution of $I$ depend on whether the basis elements of $\mathbb{T}_{I}$ contain $\{u_{1},\ldots, u_{s}\}$ or not, as well as the degrees of the generators of $I$. In the following proposition, which follows readily from the above argument, we describe the graded minimal free resolution of an almost complete intersection monomial ideal.
\end{remark}

\begin{proposition}
Let $I=(u_1,\ldots,u_q,u_{q+1})$ be an almost complete intersection monomial ideal of $R$. If $I$ is dominant, then the graded minimal free resolution of $R/I$ has the form:
$$0\to F_{q+1}\to F_{q} \to \cdots \to F_{1} \to F_{0} \to R/I \to 0$$
where $F_{i}=\bigoplus_{\tau\subseteq G(I),|\tau|=i}R(-\operatorname{lcm}(\tau))$, for $i=0,\ldots,q+1$. If $I$ satisfies the second case of Remark \ref{remark1.4}, then the graded minimal free resolution of $R/I$ has the form: 
$$0\to F_{q} \to \cdots \to F_{1} \to F_{0} \to R/I \to 0$$
where $F_{i}=\bigoplus_{\tau\subseteq G(I),|\tau|=i,u_{j}\notin\tau}R(-\operatorname{lcm}(\tau))$, for $i=0,\ldots,q$ and $j=1,\ldots,s.$
\end{proposition}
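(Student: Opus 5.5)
In both cases I would show that the minimal free resolution is the Scarf complex, and then simply read off its faces. In the Taylor complex $\mathbb{T}_I$ the basis element $e_\tau$ sits in degree $\operatorname{lcm}(\tau)$. It is known (Bayer--Peeva--Sturmfels) that the Scarf complex $\mathbb{F}_{\Gamma_I}$ is a subcomplex of every minimal resolution. So once we know that the Scarf complex is acyclic and resolves $R/I$, it is the minimal resolution, and $F_i$ is the direct sum of $R(-\operatorname{lcm}(\tau))$ over the faces $\tau\in\Gamma_I$ with $|\tau|=i$. The whole proof then reduces to identifying $\Gamma_I$ in the two cases of Remark \ref{remark1.4}.

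\emph{Dominant case.} By \cite{R1} the Taylor resolution of a dominant ideal is minimal; equivalently, every subset is a Scarf face. To see this directly, take $\sigma\neq\tau\subseteq G(I)$ and, without loss of generality, some $u\in\sigma\setminus\tau$. Let $x_j$ be a dominant variable of $u$. Then $\deg_{x_j}\operatorname{lcm}(\sigma)=\deg_{x_j}u$, while $\deg_{x_j}\operatorname{lcm}(\tau)<\deg_{x_j}u$, so the two lcms differ. Hence $\Gamma_I=2^{G(I)}$, and the Taylor complex has no cancellation. This gives $F_i=\bigoplus_{|\tau|=i}R(-\operatorname{lcm}(\tau))$ for $0\le i\le q+1$, which is consistent with $\beta_i=\binom{q+1}{i}$ from \cite[Corollary 4.9]{R1}.

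\emph{Second case.} Here $u_{q+1}\mid\operatorname{lcm}(u_1,\dots,u_s)$, and by \cite[Corollary 6.4]{R1} the Scarf resolution is minimal. I would identify $\Gamma_I$ as follows.

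1. Any $\tau\supseteq\{u_1,\dots,u_s\}$ satisfies $\operatorname{lcm}(\tau)=\operatorname{lcm}(\tau\triangle\{u_{q+1}\})$. So such $\tau$, together with its partner obtained by toggling $u_{q+1}$, is not a Scarf face. This is exactly the standard cancellation of Remark \ref{remark1.4}, and it removes the top degree $q+1$.

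2. Every subset $\tau$ that does not contain all of $u_1,\dots,u_s$ is a Scarf face. For $u_i$ with $i\le q$, a dominant variable is available because $u_1,\dots,u_q$ have pairwise disjoint supports, being a regular sequence of monomials. For $u_{q+1}$, the minimality condition on $s$ provides a variable whose exponent in $u_{q+1}$ exceeds its exponent in the lcm of the other elements of $\tau$; this is where $|\sigma|<s$ is used.

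3. The count of the surviving faces then matches the Betti numbers from \cite[Theorem 1.14]{R6}: for $i\ge s$ it is $\binom{q+1}{i}-\binom{q+1-s}{i-s}$. This confirms that no further cancellation occurs.

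The statement's phrase ``$u_j\notin\tau$ for $j=1,\dots,s$'' is to be read as ``$\tau\not\supseteq\{u_1,\dots,u_s\}$''.

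The main obstacle is step 2 for faces containing $u_{q+1}$. We need that for every $\tau\ni u_{q+1}$ missing some $u_j$ with $j\le s$, the lcm of $\tau$ differs from the lcm of every other subset. My approach would be to pick a variable $x$ with $\deg_x u_{q+1}>\deg_x\operatorname{lcm}(\tau\setminus\{u_{q+1}\})$, and handle the cases that $x$ divides some $u_j$ with $j\le s$ or that it does not, using the disjointness of the supports of $u_1,\dots,u_q$. If this gets messy, I would instead rely entirely on \cite[Corollary 6.4]{R1} together with the Betti count in step 3, since the Scarf complex is contained in the minimal resolution.
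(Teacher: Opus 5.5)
Your proposal is correct and follows essentially the paper's route. The paper simply invokes \cite[Corollary 4.9]{R1} in the dominant case, and in the second case uses \cite[Corollary 6.4]{R1} (the Scarf resolution is minimal), cancellation of the Taylor faces containing $\{u_1,\ldots,u_s\}$, and the Betti numbers of \cite[Theorem 1.14]{R6}; your reading of the index condition as $\tau\not\supseteq\{u_1,\ldots,u_s\}$ is the intended one. If you carry out the direct check in step 2, note that $\tau$ is a Scarf face iff $\tau$ is dominant \emph{and} no generator outside $\tau$ divides $\operatorname{lcm}(\tau)$, and your sketch does not address the second condition; both follow from disjointness of supports, $u_k\nmid u_{q+1}$ for $k\le q$ (needed even for the dominance of $u_k$ inside $\tau$ when $u_{q+1}\in\tau$), and the minimality of $s$, though your fallback via the Betti count already makes this check unnecessary.
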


By the above arguments, we obtain the regularity of an almost complete intersection monomial ideal as the following:

\begin{theorem} \label{th1.3}
If $I$ is an almost complete intersection monomial ideal of the second case of Remark \ref{remark1.4}. Then
\begin{equation}\label{sdreg}
\operatorname{reg}(I)=\operatorname{reg}(I')-\min\left\{\deg\left(\frac{u_{j}}{\gcd(u_{j},u_{q+1})}\right),\quad j=1,\ldots,s\right\}.
\end{equation}
If the monomial $u_{q+1}$ does not divide the least common multiple of any subset $\sigma$ of $G(I)$ and there exists $1\leq h\leq q$ with $\operatorname{Supp}(u_{j})\cap \operatorname{Supp}(u_{q+1})$ which is not empty for $j=1,\ldots,h$. Then  
\begin{equation}\label{dreg}
\operatorname{reg}(I)=\sum_{l=1}^{q+1}\operatorname{deg}(u_{l})-q-\sum_{i=1}^{h}\operatorname{deg}(\gcd(u_{i},u_{q+1}).
\end{equation}
\end{theorem}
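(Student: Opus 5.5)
Throughout I take $I'=(u_1,\ldots,u_q)$, the complete intersection generated by the regular sequence inside $G(I)$; the statement uses $I'$ without defining it, so this reading is an assumption. Since $u_1,\ldots,u_q$ are monomials forming a regular sequence, their supports are pairwise disjoint. Hence for any subset $A\subseteq\{1,\ldots,q\}$ we have $\operatorname{lcm}(u_i:i\in A)=\prod_{i\in A}u_i$. We also get $\operatorname{reg}(I')=\sum_{i=1}^q\deg u_i-q+1$.

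The plan is to read the regularity off the minimal free resolution described in the Proposition, using $\operatorname{reg}(I)=\operatorname{reg}(R/I)+1$ and
$$\operatorname{reg}(R/I)=\max\{\deg\operatorname{lcm}(\tau)-|\tau|\},$$
where $\tau$ runs over the faces appearing in that resolution.

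\emph{Second case of Remark \ref{remark1.4}.} The admissible faces $\tau$ are exactly those not containing $\{u_1,\ldots,u_s\}$.

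If $u_{q+1}\notin\tau$, then $\deg\operatorname{lcm}(\tau)-|\tau|=\sum_{i\in\tau}(\deg u_i-1)$. This is maximized by $\tau=\{u_1,\ldots,u_q\}\setminus\{u_j\}$ with $j\le s$, giving $\sum_{i\le q}\deg u_i-\deg u_j-(q-1)$.

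If $u_{q+1}\in\tau$, it suffices to take $\tau=\{u_{q+1}\}\cup(\{u_1,\ldots,u_q\}\setminus\{u_j\})$ with $j\le s$; adding further $u_i$ never decreases $\deg\operatorname{lcm}-|\tau|$, since each new $u_i$ has degree at least $1$. The key computation is the following. Because $u_{q+1}\mid\operatorname{lcm}(u_1,\ldots,u_s)=\prod_{i\le s}u_i$ and the supports are disjoint, the part of $u_{q+1}$ supported on $\operatorname{Supp}(u_j)$ divides $u_j$, and hence equals $\gcd(u_j,u_{q+1})$. Therefore
$$\operatorname{lcm}(\tau)=\gcd(u_j,u_{q+1})\prod_{i\ne j,\,i\le q}u_i,$$
and
$$\deg\operatorname{lcm}(\tau)-q=\sum_{i\le q}\deg u_i-q-\deg\bigl(u_j/\gcd(u_j,u_{q+1})\bigr).$$

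By the minimality of $s$, $u_{q+1}$ does not divide $\operatorname{lcm}$ of a proper subset of $\{u_1,\ldots,u_s\}$. So $\gcd(u_j,u_{q+1})\ne1$, i.e. it has degree at least $1$, and the second family of faces dominates the first. Taking the maximum over $j\le s$ and adding $1$ yields $\operatorname{reg}(I)=\operatorname{reg}(I')-\min_j\deg(u_j/\gcd(u_j,u_{q+1}))$.

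\emph{Dominant case.} Here the Taylor resolution is minimal by the Proposition, so all subsets $\tau\subseteq G(I)$ occur. Adding any generator strictly increases the degree of the lcm, because each generator has a dominant variable. Hence $\deg\operatorname{lcm}(\tau)-|\tau|$ is maximized at $\tau=G(I)$, exactly as in \eqref{regularity dominant}.

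It remains to compute $\deg\operatorname{lcm}(u_1,\ldots,u_{q+1})$. It equals
$$\sum_{l\le q+1}\deg u_l-\deg\gcd\Bigl(u_{q+1},\prod_{i\le q}u_i\Bigr),$$
and by disjointness of supports $\gcd(u_{q+1},\prod u_i)=\prod_i\gcd(u_i,u_{q+1})$. Only the indices $i\le h$ contribute, since those are the ones whose support meets $\operatorname{Supp}(u_{q+1})$. Subtracting $q+1$ and adding $1$ gives \eqref{dreg}.

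The main obstacle is the middle step of the second case. One must justify that the maximum is attained on a face containing $u_{q+1}$ and omitting exactly one $u_j$ with $j\le s$, and one must prove the identity that the $\operatorname{Supp}(u_j)$-part of $u_{q+1}$ equals $\gcd(u_j,u_{q+1})$. Both facts rest on the disjoint supports together with $u_{q+1}\mid\prod_{i\le s}u_i$.
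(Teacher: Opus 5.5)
Your argument is correct and is essentially the paper's proof. Both reduce to the faces $\{u_{q+1}\}\cup\{u_1,\ldots,u_q\}\setminus\{u_j\}$ with $j\le s$ and compute their lcm through $\gcd(u_j,u_{q+1})$ exactly as in (\ref{eq5}); you read the admissible faces off the Scarf description in the Proposition and also rule out faces without $u_{q+1}$, whereas the paper quotes Alesandroni's formula (\ref{eq3}) and calls the reduction to $|\sigma|=q$ obvious.

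One small correction. Adjoining $u_i$ to a face that already contains $u_{q+1}$ raises the lcm degree by $\deg\bigl(u_i/\gcd(u_i,u_{q+1})\bigr)$, not by $\deg u_i$. So this increment is at least $1$ because $u_i\nmid u_{q+1}$ (minimality of $G(I)$), not merely because $\deg u_i\ge 1$.
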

\begin{proof}
For the first case, suppose that $I=(u_1,\ldots,u_q,u_{q+1})$ and $u_{q+1}|\operatorname{lcm}(u_{1},\ldots,u_{s})$. Now, for each $1\leq k \leq s$ we want to show that
\begin{equation}\label{eq5}
\operatorname{deg}(\operatorname{lcm}(u_{1},\ldots,u_{k-1},u_{q+1},u_{k+1},\ldots,u_{s},\ldots,u_{q}))=\sum_{i=1}^{q}\operatorname{deg}(u_{i})-\deg\left(\frac{u_{k}}{\gcd(u_{k},u_{q+1})}\right).
\end{equation}
Since $u_{q+1}|\operatorname{lcm}(u_1,\ldots,u_s )$ and $u_{q+1}\nmid \operatorname{lcm}(u_1,\ldots,\bar{u_{l}},\ldots,u_s )$, for $l=1,\ldots,s$, then $\operatorname{Supp}(u_{k})\cap \operatorname{Supp}(u_{q+1})$ is not empty. Without loss of generality, suppose that $\operatorname{Supp}(u_{k})\cap \operatorname{Supp}(u_{q+1})=\{x_{1},\ldots,x_{r}\}$ and $a_{i}$ is the degree of $x_{i}$ in the monomial $u_{q+1}$. Since $u_1,\ldots,u_q$ is a regular sequence, we have the following
\begin{eqnarray}
&&\operatorname{deg}(\operatorname{lcm}(u_{1},\ldots,u_{k-1},u_{q+1},u_{k+1},ßldots,u_{s},\ldots,u_{q}))=\nonumber\\&&\operatorname{deg}(u_{1})+\ldots+\operatorname{deg}(u_{k-1})+\operatorname{deg}(u_{k+1})+\ldots+\operatorname{deg}(u_{q})+(a_{1}+\ldots+a_{r})\nonumber\\&&
=\operatorname{deg}(u_{1})+\ldots+\operatorname{deg}(u_{q})-\operatorname{deg}(u_{k})+(a_{1}+\ldots+a_{r})\nonumber\\&&
=\operatorname{deg}(u_{1})+\ldots+\operatorname{deg}(u_{q})-(\operatorname{deg}(u_{k})-(a_{1}+\ldots+a_{r})).\label{eq6}
\end{eqnarray}
Since $\operatorname{Supp}(u_{k})\cap \operatorname{Supp}(u_{q+1})=\{x_{1},\ldots,x_{r}\}$, then $\gcd(u_{k},u_{q+1})=x_{1}^{a_{1}}\ldots x_{r}^{a_r}$, where $a_{i}$ is the degree of $x_{i}$ in the monomial $u_{q+1}$. Then $\deg\left(\frac{u_{k}}{\gcd(u_{k},u_{q+1})}\right)=\operatorname{deg}(u_{k})-(a_1+\ldots+a_r)$, by putting this in equation (\ref{eq6}) we obtain (\ref{eq5}). Since $u_{1},\ldots,u_{q}$ is a regular sequence, it is obvious that for any $\tau\subseteq G(I)$ with $u_{q+1}\in \tau$ and $\tau$ is dominant, there exists $\sigma\subseteq G(I)$ with $u_{q+1}\in \sigma$, $|\tau|<|\sigma|$ and $\sigma$ is dominant such that $\operatorname{deg}(\operatorname{lcm}(\tau))-|\tau|\leq\operatorname{deg}(\operatorname{lcm}(\sigma))-|\sigma|$. According to (\ref{eq3}), we obtain  
\begin{align*}
\operatorname{reg}(R/I)&=\max\{\operatorname{deg}(\operatorname{lcm}(\sigma))-|\sigma|:\sigma\subset G(I), u_{q+1}\in\sigma, |\sigma|=q\quad and\quad \sigma\quad is\quad dominant\}\nonumber\\&
=\max\left\{\operatorname{deg}(\operatorname{lcm}(u_{1},\ldots,u_{k-1},u_{q+1},u_{k+1},\ldots,u_{s},\ldots,u_{q}))-q: k=1,\ldots,s\right\}\nonumber\\&
=\max\left\{\sum_{i=1}^{q}\operatorname{deg}(u_{i})-q-\deg\left(\frac{u_{k}}{\gcd(u_{k},u_{q+1})}\right): k=1,\ldots,s\right\}\nonumber\\&
=\sum_{i=1}^{q}\operatorname{deg}(u_{i})-q-\min\left\{\deg\left(\frac{u_{j}}{\gcd(u_{j},u_{q+1})}\right),\quad j=1,\ldots,s\right\}.
\end{align*}
Hence, 
\begin{align*}
\operatorname{reg}(I)&=\sum_{i=1}^{q}\operatorname{deg}(u_{i})-q+1-\min\left\{\deg\left(\frac{u_{j}}{\gcd(u_{j},u_{q+1})}\right),\quad j=1,\ldots,s\right\}\\&
=\operatorname{reg}(I')-\min\left\{\deg\left(\frac{u_{j}}{\gcd(u_{j},u_{q+1})}\right),\quad j=1,\ldots,s\right\}.
\end{align*}
For the second case, if the monomial $v$ does not divide the least common multiple of any subset $\sigma$ of $G(I)$, then the monomial ideal is dominant and the result follows by (\ref{regularity dominant}).
\end{proof}

The following examples are devoted to compute the regularity of an almost complete intersection monomial ideal in the first and the second case of Theorem \ref{th1.3}, respectively.

\begin{example}
Consider the polynomial ring $R=k[x_{1},\ldots,x_{10}]$ and 

$$I=(x_{1}^{5} x_{2}^{4},x_{3}^{6}x_{4}^{3}x_{5}^{3},x_{6}^{7}x_{7}^{2},x_{8}^{5}x_{9}^{4},x_{10}^{3},x_{1}^{3}x_{3}^{4}x_{6}^{2}).$$
It is clear that $I$ is an almost complete intersection monomial ideal with the complete intersection part $I^{'}=(x_{1}^{5} x_{2}^{4},x_{3}^{6}x_{4}^{3}x_{5}^{3},x_{6}^{7}x_{7}^{2},x_{8}^{5}x_{9}^{4},x_{10}^{3})$. Here, $q=5$ and $u_{6}|\operatorname{lcm}(u_{1},u_{2},u_{3})$. Then $\operatorname{reg}(I^{'})=9+12+9+9+3-5+1=38$. Now, $\frac{u_{1}}{gcd(u_{1},u_{6})}=x_{1}^{2}x_{2}^{4}$, $\frac{u_{2}}{gcd(u_{2},u_{6})}=x_{3}^{2}x_{4}^{3}x_{5}^{3}$ and $\frac{u_{3}}{gcd(u_{3},u_{6})}=x_{6}^{5}x_{7}^{2}$. Then 
$$\min\left\{\deg\left(\frac{u_{j}}{\gcd(u_{j},u_{q+1})}\right), j=1,2,3\right\}=6.$$ 

Consequently, we obtain $\operatorname{reg}(I)=\operatorname{reg}(I^{'})-6=38-6=32$. We achieve the same result by applying \textit{Macaulay2} \cite{R9}:

\begin{verbatim}
i1: R = QQ[x_1..x_10];
i2: I = monomialIdeal(x_1^5*x_2^4, x_3^6*x_4^3*x_5^3, x_6^7*x_7^2,
    x_8^5*x_9^4, x_10^3, x_1^3*x_3^4*x_6^2);
i3: regularity I
o2: 32
\end{verbatim}
\end{example}

\begin{example}
In the polynomial ring $R=k[x_{1},\ldots,x_{10}]$, if we have the following almost complete intersection monomial ideal 

$$I=(x_{1}^{5} x_{2}^{4},x_{3}^{6}x_{4}^{3}x_{5}^{3},x_{6}^{7}x_{7}^{2},x_{8}^{5}x_{9}^{4},x_{10}^{3},x_{1}^{6}x_{3}^{7}x_{6}^{2}).$$
It is clear that $I$ is dominant. Here, $q=5$ and $\operatorname{Supp}(u_{j})\cap\operatorname{Supp}(u_{6})\neq \phi$, for $j=1,2,3$. Now, $\gcd(u_{1},u_{6})=x_{1}^{5}$, $\gcd(u_{2},u_{6})=x_{3}^{6}$ and $\gcd(u_{3},u_{6})=x_{6}^{2}$. Then 
$$\operatorname{reg}(I)=9+12+9+9+3+15-5-(5+6+2)=39.$$
\end{example}

\begin{verbatim}
i1: R = QQ[x_1..x_10];
i2: I = monomialIdeal(x_1^5*x_2^4, x_3^6*x_4^3*x_5^3, x_6^7*x_7^2,
    x_8^5*x_9^4, x_10^3, x_1^6*x_3^7*x_6^2);
i3: regularity I
o2: 39
\end{verbatim}

These coincide with our result.

\begin{remark}\label{remark1.5}
If $I=\left(u_{1}, \ldots, u_{q}, u_{q+1}\right)$ is an almost complete intersection monomial ideal and $u_{1}, \ldots, u_{q}$ is a regular sequence, then from the definition of the Newton polyhedron set and the fact that $u_{1}, \ldots, u_{q}$ is a regular sequence, we obtain $u_{1}, \ldots, u_{q}\in G(\bar{I})$. We know that $u_{q+1}\in \bar{I}$, since $I\subseteq \bar{I}$. If $u_{q+1}\notin G(\bar{I})$, then there exists $v\in G(\bar{I})$ with $v|u_{q+1}$. It is obvious that $v \ne u_{i}$ for $i=1,\ldots,q$. This implies $\mu(\bar{I}) \ge q+1 = \mu(I)$.
\end{remark}

\begin{lemma}\label{lemma1.5}
Suppose $I$ is an almost complete intersection monomial ideal satisfies the second case of Remark \ref{remark1.4}. Consider $u_{h}$ with $1 \leq h \leq s$ such that $\operatorname{supp}\left(u_h\right) \cap \operatorname{supp}\left(u_{q+1}\right)=\left\{x_{h_1}, \ldots, x_{h_{j^{\prime}}}\right\}$. If there is a monomial $v \in \bar{I}$ with $v=\left(x_{h_1}^{b_{h_1}} \ldots x_{h_{j^{\prime}}}^{b_{h_{j^{\prime}}}}\right)  \cdot v^{\prime\prime}$, then there exists a monomial $u \in \bar{I}$ with $u=\left(x_{h_1}^{c_{h_1}} \ldots x_{h_{j^{\prime}}}^{c_{h_{j^{\prime}}}}\right) \cdot u^{\prime}, c_{h_l} \leq b_{h_l}$, for $l=1, \ldots, j^{\prime}$ and $\operatorname{supp}\left(u^{\prime}\right) \cap \operatorname{supp}\left(u_h\right)=\phi$. 
\end{lemma}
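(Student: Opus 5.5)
The plan is to work directly with the description of $\bar I$ via convex combinations recalled in the Introduction, namely $\bar{I}=\left(x^{\lceil a\rceil}\mid a\in \operatorname{conv}(a_{1},\ldots,a_{q+1})\right)$ (\cite[Proposition 12.1.4]{R4}). Here $a_i$ denotes the exponent vector of $u_i$. The idea is to take a convex combination that witnesses $v\in\bar I$ and modify it so that the generator $u_h$ no longer contributes, replacing $u_h$ by $u_{q+1}$. The modified point will then have zero coordinates on $\operatorname{supp}(u_h)\setminus\operatorname{supp}(u_{q+1})$. Its coordinates on the shared variables $x_{h_1},\ldots,x_{h_{j'}}$ will not increase, and the monomial it produces is the required $u$.

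First I record two structural facts. Since $u_1,\ldots,u_q$ is a regular sequence of monomials, their supports are pairwise disjoint. Next I need $\deg_{x}(u_{q+1})\le \deg_x(u_h)$ for every $x\in\operatorname{supp}(u_h)$. Since $u_{q+1}\mid \operatorname{lcm}(u_1,\ldots,u_s)$ and $h\le s$, and since $x$ occurs among $u_1,\ldots,u_q$ only in $u_h$ by disjointness, we have $\deg_x\operatorname{lcm}(u_1,\ldots,u_s)=\deg_x u_h$. This gives the inequality. In particular, on $\operatorname{supp}(u_h)$ the vector $a_{q+1}$ is dominated by $a_h$, and $a_{q+1}$ vanishes on $\operatorname{supp}(u_h)\setminus\{x_{h_1},\ldots,x_{h_{j'}}\}$.

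Now let $v\in\bar I$. Then $v$ is divisible by some $x^{\lceil p\rceil}$ with $p=\sum_{i=1}^{q+1}\lambda_i a_i$, $\lambda_i\ge 0$, $\sum\lambda_i=1$. In particular, $\lceil p_{h_l}\rceil\le b_{h_l}$ for each $l$. Define
\[
p'=\sum_{i\ne h,\ i\le q}\lambda_i a_i+(\lambda_h+\lambda_{q+1})a_{q+1}.
\]
This is again a convex combination of the $a_i$, so $u:=x^{\lceil p'\rceil}\in\bar I$.

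On a coordinate $x\in\operatorname{supp}(u_h)$, only $a_h$ and $a_{q+1}$ can be nonzero, by disjointness. Hence $p'_x=(\lambda_h+\lambda_{q+1})(a_{q+1})_x\le \lambda_h(a_h)_x+\lambda_{q+1}(a_{q+1})_x=p_x$ by the inequality above. Consequently $c_{h_l}:=\lceil p'_{h_l}\rceil\le\lceil p_{h_l}\rceil\le b_{h_l}$. Moreover $p'_x=0$ for $x\in\operatorname{supp}(u_h)\setminus\{x_{h_1},\ldots,x_{h_{j'}}\}$. Writing $u=x_{h_1}^{c_{h_1}}\cdots x_{h_{j'}}^{c_{h_{j'}}}\cdot u'$, the cofactor $u'$ is supported outside $\operatorname{supp}(u_h)$, as required.

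The coordinates of $p'$ outside $\operatorname{supp}(u_h)$ may increase, but the statement places no constraint on them. The only delicate step is the comparison $\deg_x u_{q+1}\le\deg_x u_h$ on $\operatorname{supp}(u_h)$. This is exactly where the hypotheses $h\le s$ and $u_{q+1}\mid\operatorname{lcm}(u_1,\ldots,u_s)$, together with the disjointness of supports, are used. I expect this to be the main point to write carefully; the rest is a routine monotonicity-of-ceilings argument.
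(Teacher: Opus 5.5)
Your proof is correct and follows the paper's argument. You take a convex combination witnessing $v\in\bar I$, move the weight $\lambda_h$ onto $u_{q+1}$, and use $\deg_x u_{q+1}\le \deg_x u_h$ for $x\in\operatorname{supp}(u_h)$ to see that the exponents of $x_{h_1},\ldots,x_{h_{j'}}$ do not increase while the other variables of $\operatorname{supp}(u_h)$ disappear. Your write-up is more careful than the paper's on two points. You derive that inequality from $u_{q+1}\mid\operatorname{lcm}(u_1,\ldots,u_s)$ and disjointness of supports, where the paper uses it tacitly. You also work with $x^{\lceil p\rceil}\mid v$, rather than asserting that the exponent vector of $v$ equals the convex combination.
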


\begin{proof}
Consider the monomial $u_h$ with $1 \leq h \leq s$, such that $u_h=\left(x_{h_1}^{a_{h_1}} \ldots x_{h_{j^{\prime}}}^{a_{h_{j^{\prime}}}}\right) \cdot\left(x_{k_1}^{a_{k_1}} \ldots x_{k_{l^{\prime}}}^{a_{k_{l^{\prime}}}}\right)$ and suppose $v \in \bar{I}$ with $v=\left(x_{h_1}^{b_{h_1}} \ldots x_{h_{j^{\prime}}}^{b_{h_{j^{\prime}}}}\right) \cdot\left(x_{k_1}^{b_{k_1}} \ldots x_{k_{l^{\prime}}}^{b_{k_{l^{\prime}}}}\right) \cdot v^{\prime}$ and $\operatorname{supp}\left(v^{\prime}\right) \cap \operatorname{supp}\left(u_h\right)=\phi$. We may consider $u_{q+1}=\left(x_{h_1}^{d_{h_1}} \ldots x_{h_{j^{\prime}}}^{d_{h_{j^{\prime}}}}\right).w$ and $\operatorname{supp}(w) \cap \operatorname{supp}\left(u_h\right)=\phi$. Since $v \in \bar{I}$, there exist $0 \leq \lambda_1, \ldots, \lambda_q, \lambda_{q+1} \leq 1$ with $\lambda_1+\cdots+\lambda_h+\cdots+\lambda_q+\lambda_{q+1}=1$ and $a_v=\lambda_1 a_{u_1}+\cdots+\lambda_h a_{u_h}+\cdots+\lambda_q a_{u_q}+\lambda_{q+1} a_{u_{q+1}} \in N P(I)$, where $a_v$ is the exponent vector of the monomial $v$ of the minimal generators of $I$. If $\alpha_t=\lambda_t$, for $t \neq h, q+1$, while $\alpha_h=0$ and $\alpha_{q+1}=\lambda_h+\lambda_{q+1}$, then $0 \leq \alpha_1, \ldots, \alpha_q, \alpha_{q+1} \leq 1$ with $\alpha_1+\cdots+\alpha_h+\cdots+\alpha_q+\alpha_{q+1}=1$, this means that $\alpha_1 a_{u_1}+\cdots+ \alpha_h a_{u_h}+\cdots+\alpha_q a_{u_q}+\alpha_{q+1} a_{u_{q+1}} \in N P(I)$. If we put $a_u=\alpha_1 a_{u_1}+\cdots+\alpha_h a_{u_h}+\cdots+\alpha_q a_{u_q}+ \alpha_{q+1} a_{u_{q+1}}$, then it is obvious that for $r=1, \ldots, l^{\prime}$ the $k_r$ components of $a_u$ are all zero and for $r^{\prime}= 1, \ldots, j^{\prime}$ the $h_{r^{\prime}}$ component of $a_u$ has the form $\left(\lambda_h+\lambda_{q+1}\right) d_{h_{r^{\prime}}}$ and satisfies $\left(\lambda_h+\lambda_{q+1}\right) d_{h_{r^{\prime}}} \leq \lambda_h a_{h_{r^{\prime}}}+\lambda_{q+1} d_{h_{r^{\prime}}}=b_{h_{r^{\prime}}}$. Hence, $u \in \bar{I}$ and has the form $u=\left(x_{h_1}^{c_{h_1}} \ldots x_{h_{j^{\prime}}}^{c_{h_{j^{\prime}}}}\right) \cdot u^{\prime}, c_{h_{r^{\prime}}} \leq b_{h_{r^{\prime}}}$, for $r^{\prime}=1, \ldots, j^{\prime}$ and $\operatorname{supp}\left(u^{\prime}\right) \cap \operatorname{supp}\left(u_h\right)=\phi$. This completes the proof.
\end{proof}

To provide a clearer and more detailed explanation of the lemma stated above, we present the following illustrative example.

\begin{example}
If $I=\left(x_1^3 x_2^4 x_3^2 x_4, x_5^4 x_6^2 x_7^2 x_8^3, x_1^2 x_2^2 x_5^3 x_6^2\right)=(u_{1},u_{2},u_{3})$ is an almost complete intersection monomial ideal in the polynomial ring $k[x_{1},\ldots,x_{8}]$. We can see that $\operatorname{supp}(u_{1})\cap \operatorname{supp} (u_{3})=\{x_{1},x_{2}\}$ and  $\operatorname{supp}(u_{2})\cap \operatorname{supp} (u_{3})=\{x_{5},x_{6}\}$. Applying \textit{Macaulay2}, we obtain

$\bar{I}=I+(x_1 x_2 x_5^4 x_6^2 x_7 x_8^2, x_1 x_2 x_3 x_4 x_5^3 x_6^2 x_7^2 x_8^3, x_1 x_2^2 x_3 x_4 x_5^3 x_6^2 x_7^2 x_8^2, x_1^2 x_2^2 x_3 x_4 x_5^2 x_6 x_7 x_8^2,$ 

$ x_1^2 x_2^3 x_3 x_4 x_5^2 x_6 x_7 x_8, x_1^3 x_2^3 x_3 x_4 x_5^2 x_6, x_1^3 x_2^3 x_3^2 x_4 x_5 x_6 x_7 x_8)$.

Now, $v=x_1^2 x_2^2 x_3 x_4 x_5^2 x_6 x_7 x_8^2\in \bar{I}$ and $\operatorname{supp}(u_{1})\subseteq \operatorname{supp}(v)$ with $\operatorname{deg}_{x_{1}}(v)=2$ and $\operatorname{deg}_{x_{2}}(v)=2$. We aim to verify the existence of a monomial with the required property $u\in \bar{I}$ such that $\operatorname{supp}(u)\cap \operatorname{supp}(u_{1})=\{x_{1},x_{2}\}$, $\operatorname{deg}_{x_{i}}(u)\leq \operatorname{deg}_{x_{i}}(v)$ for $i=1,2$. To do this, we choose $u=x_1 x_2 x_5^4 x_6^2 x_7 x_8^2\in \bar{I}$. By the same way, if $v=x_1^3 x_2^3 x_3^2 x_4 x_5 x_6 x_7 x_8\in \bar{I}$ and $\operatorname{supp}(u_{2})\subseteq \operatorname{supp}(v)$ with $\operatorname{deg}_{x_{5}}(v)=1$ and $\operatorname{deg}_{x_{6}}(v)=1$. Now, $a_{v}=(3,3,2,1,1,1,1,1)=\frac{3}{4}a_{u_{1}}+\frac{1}{4}a_{u_{2}}+0.a_{u_{3}}$, and so we take $a_{u}=\frac{3}{4}.a_{u_{1}}+0.a_{u_{2}}+(\frac{1}{4}+0).a_{u_{3}} =\frac{3}{4}(3,4,2,1,0,0,0,0)+0.(0,0,0,0,4,2,2,3)+\frac{1}{4}(2,2,0,0,3,2,0,0) = (5,4,2,1,1,1,0,0)$. Then $u=x_{1}^{5}x_{2}^{4}x_{3}^{2}x_{4}x_{5}x_{6}=x_{5}x_{6}.u_{1}\in \bar{I}$, where $\operatorname{supp}(u)\cap \operatorname{supp}(u_{2})=\{x_{5},x_{6}\}$, $\operatorname{deg}_{x_{i}}(u)\leq \operatorname{deg}_{x_{i}}(v)$ for $i=5,6$.
\end{example}

\begin{lemma}\label{lemma1.6}
Suppose that $I$ is an almost complete intersection monomial ideal satisfies the second case of Remark \ref{remark1.4} and there exists a generator $u_{i}$ for some $1 \leq i \leq s$ with $\operatorname{supp}\left(u_i\right) \cap \operatorname{supp}\left(u_{q+1}\right)=\left\{x_1, \ldots, x_j\right\}$.  If $u^{\prime}=x_1^{c_1} x_2^{c_2} \ldots x_j^{c_j} \cdot x_{i_1}^{c_{i_1}} \ldots x_{i_l}^{c_{i_l}} \cdot v \in \bar{I}$, where $v$ is a monomial with $c_k=0$, for some $1 \leq k \leq j$, then $c_1=\cdots=c_j=c_{i_1}=\cdots=c_{i_l}=0$.
\end{lemma}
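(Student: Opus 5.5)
The plan is to read off the exponent of $x_k$ in $u'$ from the description of $\bar I$ through the Newton polyhedron, as in the proof of Lemma \ref{lemma1.5}. We interpret $u'$ as a minimal generator of $\bar I$; otherwise $x_1u'$ would violate the conclusion. We also take $x_{i_1},\ldots,x_{i_l}$ to be the remaining variables of $\operatorname{supp}(u_i)$, so that $\operatorname{supp}(v)\cap\operatorname{supp}(u_i)=\emptyset$.

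\textbf{Step 1 (write down the convex combination).} Since $u'\in\bar I$, there are $\lambda_1,\ldots,\lambda_{q+1}\ge 0$ with $\sum_t\lambda_t=1$ such that
$$a_{u'}\ \ge\ \lambda_1a_{u_1}+\cdots+\lambda_qa_{u_q}+\lambda_{q+1}a_{u_{q+1}}$$
componentwise. Here we use $NP(I)=\operatorname{conv}(a_{u_1},\ldots,a_{u_{q+1}})+\mathbb{R}^n_{\ge0}$ together with \cite[Proposition 12.1.4]{R4}.

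\textbf{Step 2 (look at the coordinate $x_k$).} Because $u_1,\ldots,u_q$ is a regular sequence of monomials, their supports are pairwise disjoint. Hence the variable $x_k\in\operatorname{supp}(u_i)$ occurs only in $u_i$ and, since $x_k\in\operatorname{supp}(u_i)\cap\operatorname{supp}(u_{q+1})$, in $u_{q+1}$. The $x_k$-coordinate of the inequality in Step 1 therefore reads
$$0=c_k\ \ge\ \lambda_i\deg_{x_k}(u_i)+\lambda_{q+1}\deg_{x_k}(u_{q+1}).$$
Both degrees on the right are positive, so $\lambda_i=\lambda_{q+1}=0$.

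\textbf{Step 3 (conclude).} Now the convex combination only involves $u_t$ with $t\ne i,q+1$. The supports of these generators are disjoint from $\operatorname{supp}(u_i)$, so the combination has zero coordinates at $x_1,\ldots,x_j,x_{i_1},\ldots,x_{i_l}$. Let $u''$ be the monomial obtained from $u'$ by deleting all variables of $\operatorname{supp}(u_i)$. Its exponent vector still dominates the same combination, so $u''\in\bar I$, and $u''$ divides $u'$. Minimality of $u'$ in $G(\bar I)$ then forces $u''=u'$, which gives $c_1=\cdots=c_j=c_{i_1}=\cdots=c_{i_l}=0$.

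\textbf{Main obstacle.} Steps 1–2 are routine. The delicate point is Step 3: we must make precise that the statement concerns minimal generators of $\bar I$, since an arbitrary element of $\bar I$ may carry extra factors in $\operatorname{supp}(u_i)$. We must also check that deleting those variables keeps us inside $\bar I$; this holds because membership is decided by domination of a point of $\operatorname{conv}(a_{u_1},\ldots,a_{u_{q+1}})$ after rounding up.
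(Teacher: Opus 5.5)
Your proof is correct and follows the same idea as the paper's proof. A zero exponent at $x_k$ rules out any contribution of $u_i$ and $u_{q+1}$, the remaining generators avoid $\operatorname{supp}(u_i)$, and minimality of $u'$ in $G(\bar{I})$ then removes the $\operatorname{supp}(u_i)$-part. The only differences are that the paper runs the first step through $u'^k\in I^k$ rather than the Newton polyhedron, and that you make explicit the restriction to minimal generators, without which the statement is false; the paper uses this tacitly when it speaks of ``a generator of $\bar{I}$''.
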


\begin{proof}
Consider the fixed monomial $u_i$ for $1 \leq i \leq s$, then $\operatorname{supp}\left(u_i\right) \cap \operatorname{supp}\left(u_{q+1}\right)=\left\{x_1, \ldots, x_j\right\}$. That means $u_{q+1}=x_1^{b_1} \ldots x_j^{b_j} \cdot u$, where $u$ is a monomial and $\operatorname{supp}\left(u_i\right) \cap \operatorname{supp}(u)=\phi$. Let $u= x_{j_1}^{b_{j_1}} \ldots x_{j_k}^{b_{j_k}}, u_i=x_1^{a_1} \ldots x_j^{a_j} \cdot x_{i_1}^{a_{i_1}} \ldots x_{i_l}^{a_{i_l}}$, where $b_t \leq a_t$ for $t=1,\ldots, j$. If $u^{\prime} \in \bar{I}$ such that $u^{\prime}= x_1^{c_1} x_2^{c_2} \ldots x_j^{c_j} \cdot x_{i_1}^{c_{i_1}} \ldots x_{i_l}^{c_{i_l}}.v$ with $c_1=0$, then there exists a positive integer $k$ with ${u^{\prime}}^k \in I^k$. Therefore, $x_2^{k c_2} \ldots x_j^{k c_j} \cdot x_{i_1}^{k c_{i_1}} \ldots x_{i_l}^{k c_{i_l}} v^k \in I^k$ which is impossible, since all generators $I^k$ are divisible by $u_i^r= x_1^{r a_1} \ldots x_j^{r a_j} \cdot x_{i_1}^{r a_{i_1}} \ldots x_{i_l}^{r a_{i_l}}$ for some $0\leq r\leq k$. Thus, from $x_2^{k c_2} \ldots x_j^{k c_j} \cdot x_{i_1}^{k c_{i_1}} \ldots x_{i_l}^{k c_{i_l}} v^k \in I^k$ we deduce that $x_2^{k c_2} \ldots x_j^{k c_j} \cdot x_{i_1}^{k c_{i_1}} \ldots x_{i_l}^{k c_{i_l}} v^k$ is divisible by a generator of $\bar{I}$. Consequently, $u_i$ becomes redundant and may be omitted from this generator of $\bar{I}$. This means that if one of $c_1, \ldots, c_j$ is zero in a generator of $\bar{I}$, then $c_1=\cdots=c_j=c_{i_1}=\cdots=c_{i_l}=$ 0, as required.
\end{proof}

To clarify the above lemma and make its meaning more transparent, we present a simple illustrative example that highlights the main idea in a concrete setting.
  
\begin{example}
If $I=\left(x_1^3 x_2^4 x_3^2 x_4, x_5^4 x_6^2 x_7^2 x_8^3,x_{9}^{2}x_{10}, x_1^2 x_2^2 x_5^3 x_6^2\right)=(u_{1},u_{2},u_{3},u_{4})$ is an almost complete intersection monomial ideal in the polynomial ring $k[x_{1},\ldots,x_{10}]$. We can see that $\operatorname{supp}(u_{1})\cap \operatorname{supp} (u_{4})=\{x_{1},x_{2}\}$ and $\operatorname{supp}(u_{2})\cap \operatorname{supp} (u_{4})=\{x_{5},x_{6}\}$. Applying \textit{Macaulay2}, we obtain
\begin{eqnarray}
\bar{I}=I+&&(x_5^2 x_6 x_7 x_8^2 x_9 x_{10}, x_1 x_2 x_5^2 x_6 x_9 x_{10}, x_1 x_2 x_5^4 x_6^2 x_7 x_8^2, x_1 x_2 x_3 x_4 x_5 x_6 x_7 x_8 x_9 x_{10},\nonumber\\&& x_1 x_2 x_3 x_4 x_5^3 x_6^2 x_7^2 x_8^3, x_1 x_2^2 x_3 x_4 x_5^3 x_6^2 x_7^2 x_8^2, x_1^2 x_2^2 x_3 x_4 x_9 x_{10}, x_1^2 x_2^2 x_3 x_4 x_5^2 x_6 x_7 x_8^2,\nonumber\\&& x_1^2 x_2^3 x_3 x_4 x_5^2 x_6 x_7 x_8,x_1^3 x_2^3 x_3 x_4 x_5^2 x_6, x_1^3 x_2^3 x_3^2 x_4 x_5 x_6 x_7 x_8)\nonumber
\end{eqnarray}
We can see that if the power of $x_{1}$ or $x_{2}$ is zero in a generator $u$ of $\bar{I}$, then the power of all the variables of $\operatorname{supp}(u_{1})$ is zero in $u$. The same argument is true for $x_{5}$ and $x_{6}$.
\end{example}

The preceding two lemmas enable us to eliminate several basis elements from the Taylor resolution of $\bar{I}$ that do not contribute to its minimal free resolution.The significance of this process lies in its ability to determine the largest degree among the basis elements of minimal free resolution. As a consequence, it yields an upper bound for the Castelnuovo–Mumford regularity of $R/\bar{I}$.

\begin{remark}\label{remark1.13}
Suppose that $I$ is an almost complete intersection monomial ideal satisfies the second case of Remark \ref{remark1.4}.  Then $\bar{I}=\left(u_{1}, \ldots, u_{q}, v_{0}, v_{1}, \ldots, v_{p}, \right)$, where $v_{0} \mid u_{q+1}$. Therefore, we can consider the following two cases:

\begin{itemize}
\item For every $i=1,\ldots,s$, there exists $l=0,\ldots,p$ such that $\operatorname{Supp}(u_{i})\cap \operatorname{Supp}(u_{q+1})\not\subseteq \operatorname{supp}(v_{l})$, or 
\item If $\operatorname{Supp}(u_{i})\cap \operatorname{Supp}(u_{q+1}) \subseteq \operatorname{supp}\left(v_{l}\right)$, for every $l=0,1, \ldots, p$, then there exists a monomial $v^{\prime \prime}:=x_{1}^{d_{1}} \cdots x_{j}^{d_{j}} \cdot v^{\prime}\in \{v_{0},\cdots,v_{p}\}$ for a monomial $v^{\prime}$ such that $\operatorname{supp}\left(u_{i}\right) \cap \operatorname{supp}\left(v^{\prime}\right)=\varnothing$ and $d_{l}$, for $l=1,\ldots,j$, is the minimum power of $x_{l}$ appearing in the generators of $\bar{I}$. 
\end{itemize}
\end{remark}

\begin{theorem}\label{th1.14}
Suppose that $I=\left(u_{1}, \ldots, u_{q}, u_{q+1}\right)$ is an almost complete intersection monomial ideal satisfied in Remark \ref{remark1.13}. Then
\begin{eqnarray}
&&\operatorname{reg}\left(R/\bar{I}\right) \leq\nonumber\\&& \max \left\{\operatorname{deg}\left(\operatorname{lcm}\left(\left\{u_{1}, \ldots, u_{i-1}, u_{q+1}, u_{i+1}, \ldots, u_{s}, \ldots, u_{q}\right\}\right)\right)-q ; i=1, \ldots, s\right\}.\nonumber
\end{eqnarray}

\end{theorem}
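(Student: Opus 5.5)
We bound the regularity of $R/\bar{I}$ through the Taylor resolution of $\bar{I}$, pruned as in Lyubeznik's Theorem \ref{lyubznik}, together with the degree of the $\operatorname{lcm}$ of its generators. Write $\bar{I}=(u_1,\ldots,u_q,v_0,\ldots,v_p)$ as in Remark \ref{remark1.13}. By Remark \ref{remark1.5}, $u_1,\ldots,u_q\in G(\bar I)$ and $v_0\mid u_{q+1}$.

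The key observation is that every minimal generator of $\bar I$ has exponent vector $\lceil a\rceil$ for some $a\in\operatorname{conv}(a_{u_1},\ldots,a_{u_{q+1}})$. Hence for every variable $x$ we have
$$\deg_x(\operatorname{lcm}(G(\bar I)))\le\max\{\deg_x u_1,\ldots,\deg_x u_{q+1}\}.$$
Because the supports of $u_1,\ldots,u_q$ are pairwise disjoint, we get $\deg\operatorname{lcm}(\tau)\le \deg\operatorname{lcm}(u_1,\ldots,u_{q+1})$ for every $\tau\subseteq G(\bar I)$. Any Taylor (or Lyubeznik) basis element $\tau$ of homological degree $|\tau|$ therefore contributes at most $\deg\operatorname{lcm}(\tau)-|\tau|$ to $\operatorname{reg}(R/\bar I)$. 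A crude count would not suffice, since $\operatorname{lcm}(u_1,\ldots,u_q)$ alone already gives $\sum\deg u_i-q$, which exceeds the target. So the real work is to show that the faces with large $\operatorname{lcm}$ do not survive into the minimal resolution, or survive only with large $|\tau|$.

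First I compute the target. By (\ref{eq5}), each term in the maximum equals $\sum_{i=1}^q\deg u_i-q-\deg(u_k/\gcd(u_k,u_{q+1}))$. The right-hand side is thus exactly $\operatorname{reg}(R/I)$, as computed in the proof of Theorem \ref{th1.3}.

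Next I order the generators of $\bar I$ as $v_0,\ldots,v_p,u_1,\ldots,u_q$ and apply Lyubeznik's criterion. A face $\tau$ that survives, and that contains all of $u_1,\ldots,u_s$ together with some $v_l$, is killed: the earlier $v_0$, or a generator produced by Lemma \ref{lemma1.5}, divides $\operatorname{lcm}$ of a tail of $\tau$, because $u_{q+1}\mid\operatorname{lcm}(u_1,\ldots,u_s)$. This mirrors the cancellation in Remark \ref{remark1.4}.

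For faces $\tau$ that omit some $u_k$ with $k\le s$, I use the dichotomy of Remark \ref{remark1.13}. In the first alternative, some $v_l$ misses a variable of $\operatorname{Supp}(u_k)\cap\operatorname{Supp}(u_{q+1})$, so by Lemma \ref{lemma1.6} it misses all of $\operatorname{supp}(u_k)$. In the second alternative, the generator $v''$ carries only the minimal powers $d_l$ on these variables. In either case I want to show that the $x$-degrees of $\operatorname{lcm}(\tau)$ on $\operatorname{supp}(u_k)$ are bounded by those of $\gcd(u_k,u_{q+1})$. If not, $\tau$ contains a generator whose exponents there are dominated, in the sense of Lemma \ref{lemma1.5}, by a $\bar I$-generator supported off $u_k$, and the Lyubeznik condition then removes $\tau$. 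Granting this, $\deg\operatorname{lcm}(\tau)\le\sum_{i\ne k}\deg u_i+\deg\gcd(u_k,u_{q+1})$.

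The inequality $|\tau|\ge q$ is not automatic, because the $v_l$ can add mixed variables. I would control it by replacing the count $|\tau|$ with a per-block accounting. For each block $\operatorname{supp}(u_i)$ with $u_i\notin\tau$, the loss in degree relative to $\deg u_i$ is at least $1$ (indeed at least $\deg u_i-\max_{v\in\tau}\deg_{\operatorname{supp}u_i}v\ge 1$), which compensates for the missing element. This yields $\deg\operatorname{lcm}(\tau)-|\tau|\le \sum\deg u_i-q-\min_k\deg(u_k/\gcd(u_k,u_{q+1}))$.

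I expect this last accounting to be the main obstacle, since generators $v_l$ of $\bar I$ can spread small positive exponents across several blocks at once. There I would lean on Lemma \ref{lemma1.6}, which forces each $v_l$ to either meet a block fully or not at all, to make the per-block compensation rigorous.
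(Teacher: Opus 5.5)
Your set-up matches the paper's. You order $G(\bar I)$ as $v_0,\ldots,v_p,u_1,\ldots,u_q$, and you discard every face containing $u_1,\ldots,u_s$ by Lyubeznik's criterion, since $v_0\mid u_{q+1}\mid\operatorname{lcm}(u_1,\ldots,u_s)$. You then treat faces omitting some $u_k$ ($k\le s$) with Lemma \ref{lemma1.6} and the dichotomy of Remark \ref{remark1.13}. However, the step that actually produces the inequality is only granted, and both mechanisms you offer for it are false as stated.

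\textbf{First claim.} You say an admissible face $\tau$ omitting $u_k$ has $\operatorname{lcm}(\tau)$ bounded on $\operatorname{supp}(u_k)$ by $\gcd(u_k,u_{q+1})$, because otherwise Lyubeznik's condition removes it. But Lyubeznik's condition removes a face only when an earlier generator divides the lcm of a tail of length at least two. Blockwise domination of one member of $\tau$ is not such a condition, and singletons are never removed. In the paper's own example $I=(x_1^3x_2^2,x_3^4x_4^3,x_5x_6^2,x_1^2x_3^2)$, the minimal generator $x_1^3x_2x_3$ of $\bar I$ is a face omitting $u_1$. Its restriction $x_1^3x_2$ to $\operatorname{supp}(u_1)$ does not divide $\gcd(u_1,u_4)=x_1^2$.

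\textbf{Second claim.} Your per-block compensation measures the loss through $\max_{v\in\tau}$, but what matters is $\operatorname{lcm}(\tau)$, and two generators of $\bar I$ can jointly restore all of $u_i$ on its block. Take $I=(xy^{10},z^{10},t^2,y^{10}z^5)\subseteq k[x,y,z,t]$, so $q=3$ and $s=2$.
\begin{itemize}
\item $z^5t$ and $xy^5t$ are minimal generators of $\bar I$: they are the ceilings of $\tfrac12 a_{u_2}+\tfrac12 a_{u_3}$ and $\tfrac12 a_{u_1}+\tfrac12 a_{u_3}$.
\item They avoid $y$ and $z$ respectively, so the first alternative of Remark \ref{remark1.13} holds for $i=1,2$. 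Also $v_0=u_4$.
\item The face $\{u_4,xy^5t\}$ is Lyubeznik-admissible for your order, since nothing precedes $v_0$, and it omits $u_1$.
\item Yet its lcm $xy^{10}z^5t$ agrees with $u_1$ on $\operatorname{supp}(u_1)$. The loss on that block is $0$, although your quantity $\deg u_1-\max_{v\in\tau}\deg_{\operatorname{supp}u_1}v$ equals $1$.
\end{itemize}
This face also violates your first claim, since $xy^{10}\nmid\gcd(u_1,u_4)=y^{10}$.

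\textbf{What is missing.} You need a genuine trade-off between $|\tau|$ and $\deg\operatorname{lcm}(\tau)$ over all admissible faces. This includes faces with $|\tau|<q$ and faces whose $v_l$'s jointly fill a block.

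\textbf{How the paper proceeds.} In the first alternative, Lemma \ref{lemma1.6} gives a $v_l$ disjoint from $\operatorname{supp}(u_i)$ that divides $\operatorname{lcm}(u_j:j\neq i)$. This kills every face containing $\{u_j\}_{j\neq i}$. In the second alternative, the distinguished generator $v''$ kills every such $q$-face except $\{u_j\}_{j\neq i}\cup\{v''\}$. The paper then bounds larger faces, which must omit two of $u_1,\ldots,u_s$, by the lcm of this reference face. Finally it uses that $v''$ has exponents on $\operatorname{supp}(u_i)\cap\operatorname{supp}(u_{q+1})$ no larger than those of $v_0\mid u_{q+1}$. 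You never use $v''$ as a comparison face and give no working substitute. The paper states that final comparison only tersely as well, so this is exactly the step where your write-up needs a real argument.
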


\begin{proof}
According to Remark \ref{remark1.5}, since $u_{1}, \ldots, u_{q}$ is a regular sequence, they appear in the minimal generating set of $\bar{I}$. Then we can rearrange the integral closure of $I$ as $\bar{I}=\left( v_{0}, v_{1}, \ldots, v_{p}, u_{1}, \ldots, u_{q}\right)$, where $v_{0} \mid u_{q+1}$ and the degree of $v_{i}$ does not exceed the maximum degree of $u_{j}$ for $i=1, \ldots, p$ and $j=1, \ldots, q+1$. Now, suppose that $\sigma$ is any subset of $G(\bar{I})$ such that $\left\{u_{1}, \ldots, u_{s}\right\} \subseteq \sigma$. Then $v_{0}\mid \operatorname{lcm}(\sigma)$. Therefore, from the Lyubiznik resolution we obtain that $\sigma$  does not form a base in the minimal free resolution of $\bar{I}$. In the second step, we look for those subsets $A$ of $G(\bar{I})$ such that $u_{i} \notin \sigma$, for $1 \leq i \leq s$. Let $u_{i}=x_{1}^{a_{1}} \ldots x_{j}^{a_{j}} \cdot x_{i_{1}}^{a_{i_{1}}} \ldots x_{i_{l}}^{a_{i_{l}}}$ and $u_{q+1}=x_{1}^{b_{1}} \ldots x_{j}^{b_{j}} \cdot u$, where $u$ is a monomial element and $\operatorname{supp}\left(u_{i}\right) \cap \operatorname{supp}(u)=\phi$. If there exists a monomial $v_{l} \in G(\bar{I})\backslash\{u_{1},...,u_{i-1},u_{i+1}...,u_{q}\}$ such that $x_{k} \notin \operatorname{supp}\left(v_{l}\right)$, for some $1 \leq k \leq j$, then from Lemma \ref{lemma1.6} we have $\operatorname{supp}\left(u_{i}\right) \cap \operatorname{supp}\left(v_{l}\right)=\phi$. Then $v_{l}|\operatorname{lcm}\left(u_{1}, \ldots, u_{i-1}, u_{i+1}, \ldots, u_{s}, \ldots, u_{q}\right)$. This implies that if $\sigma$ is any subset of $G(\bar{I})$ such that $\left\{u_{1}, \ldots, u_{i-1}, u_{i+1}, \ldots, u_{s}, \ldots, u_{q}\right\} \subseteq \sigma$, then $\sigma$ does not form a base in the minimal free resolution of $\bar{I}$. If there is no monomial $v_{l} \in G(\bar{I})\backslash\{u_{1},\ldots,u_{i-1},u_{i+1}...,u_{q}\}$ such that $x_{k} \notin \operatorname{supp}\left(v_{l}\right)$, for some $1 \leq k \leq j$, then from Lemma \ref{lemma1.5} and Lemma \ref{lemma1.6} we have $\left\{x_{1}, \ldots, x_{j}\right\} \subseteq \operatorname{supp}\left(v_{l}\right)$, for every $l=0,1, \ldots, p$ and there exists a monomial $v^{\prime \prime}:=x_{1}^{d_{1}} \cdots x_{j}^{d_{j}} \cdot v^{\prime}$ for a monomial $v^{\prime}$ such that $\operatorname{supp}\left(u_{i}\right) \cap \operatorname{supp}\left(v^{\prime}\right)=\varnothing$ and $d_{l}$ is the minimum power of $x_{l}$ appearing in the generators of $\bar{I}$. Then $v^{\prime \prime} |\operatorname{lcm}\left(u_{1}, \ldots, u_{i-1}, v_{t}, u_{i+1}, \ldots, u_{s}, \ldots, u_{q}\right)$, for $0 \leq t \leq p$ and $v^{\prime \prime} \neq v_{t}$. Again, this implies that, if $\sigma$ is any subset of $G(\bar{I})$ such that $\left\{u_{1}, \ldots, u_{i-1}, v_{t}, u_{i+1}, \ldots, u_{s}, \ldots, u_{q}\right\} \subseteq \sigma$, for $0 \leq t \leq p$ and $v^{\prime \prime} \neq v_{t}$. Then $\sigma$ does not form a base in the minimal free resolution of $\bar{I}$. This means that the only possible case for a subset $\sigma \subseteq G(\bar{I})$ with $|\sigma|=q$ and $\left\{u_{1}, \ldots, u_{i-1}, u_{i+1}, \ldots, u_{s}, \ldots, u_{q}\right\} \subseteq \sigma$, for each $i=1, \ldots, s$, appears in the minimal free resolution of $\bar{I}$, if the following conditions hold:
\begin{itemize}
    \item $\left\{x_{1}, \ldots, x_{j}\right\} \subseteq \operatorname{supp}\left(v_{l}\right)$ for every $l=0,1, \ldots, p$, and
    \item There exists a monomial $v^{\prime}$ such that $\operatorname{supp}\left(u_{i}\right) \cap \operatorname{supp}\left(v^{\prime}\right)=\varnothing$ and $v^{\prime \prime}:=x_{1}^{d_{1}} \cdots x_{j}^{d_{j}} \cdot v^{\prime} \in \sigma$, where $\operatorname{supp}\left(u_{i}\right) \cap \operatorname{supp}\left(u_{q+1}\right)=\left\{x_{1}, \ldots, x_{j}\right\}$ and $d_{l}$ is the minimum power of $x_{l}$ appearing in the generators of $\bar{I}$.
\end{itemize}

 In addition, the subset $\sigma$ of $G(\bar{I})$ does not appear in the minimal free resolution of $\bar{I}$, where $|\sigma|>q$ and $\left\{u_{1}, \ldots, u_{i-1}, u_{i+1}, \ldots, u_{s}, \ldots, u_{q}\right\} \subseteq \sigma$, for each $i=1, \ldots, s$. This implies that the only possible case for a subset $\sigma$ of $G(\bar{I})$ appearing in the minimal free resolution of $\bar{I}$ with $|\sigma|>q$, is when it does not contain at least two of the $u_{j}$ 's, for $j \leq s$. Thus, the power of the variables appearing in the $v_{j}$ 's does not exceed the power of the variables appearing in the $u_{k}$'s. Suppose that $u_{i}, u_{j} \notin \sigma$ for $1 \leq k, j \leq s$. Therefore, 
$$
\operatorname{lcm}(\sigma) \leq \operatorname{lcm}\left(\left\{u_{1}, \ldots, u_{i-1}, v^{\prime \prime}, u_{i+1}, \ldots, u_{s}, \ldots, u_{q}\right\}\right),
$$

where $\operatorname{supp}\left(u_{i}\right) \cap \operatorname{supp}\left(u_{q+1}\right)=\left\{x_{1}, \ldots, x_{j}\right\},\left\{x_{1}, \ldots, x_{j}\right\} \subseteq \operatorname{supp}\left(v_{l}\right)$ for every $l=0,1, \ldots, p$, and $v^{\prime \prime}:=x_{1}^{d_{1}} \cdots x_{j}^{d_{j}}  \cdot v^{\prime}$ for a monomial a monomial $v^{\prime}$ such that $\operatorname{supp}\left(u_{i}\right) \cap \operatorname{supp}\left(v^{\prime}\right)=\emptyset$ and $d_{l}$ is the minimum power of $x_{l}$ appearing in the generators of $\bar{I}$. We know that $u_{q+1}$ and $v_{0}$ have the same form as $v^{\prime\prime}$. Thus, the powers of $x_{1},\ldots,x_{j}$ are less than or equal to those of $v_{0}$. Hence,

\begin{eqnarray}
&&\operatorname{reg}\left(R/{\bar{I}}\right)\nonumber\\&& \leq \max \left\{\operatorname{deg}\left(\operatorname{lcm}\left(\left\{u_{1}, \ldots, u_{i-1}, v^{\prime \prime}, u_{i+1}, \ldots, u_{s}, \ldots, u_{q}\right\}\right)\right)-q ; i=1, \ldots, s\right\}\nonumber\\&&
\leq \max \left\{\operatorname{deg}\left(\operatorname{lcm}\left(\left\{u_{1}, \ldots, u_{i-1}, v_{0}, u_{i+1}, \ldots, u_{s}, \ldots, u_{q}\right\}\right)\right)-q ; i=1, \ldots, s\right\}\nonumber.
\end{eqnarray}
This completes the proof.
\end{proof}

To make the proof of Theorem \ref{th1.14} more transparent, we present the following example.

\begin{example}
Consider the monomial ideal $I=(x_{1}^{3}x_{2}^{2},x_{3}^{4}x_{4}^{3},x_{5}x_{6}^{2},x_{1}^{2}x_{3}^{2})$. Then its integral closure has the form:\\
$\bar{I}=(x_{1}^{2}x_{3}^{2}, x_{3}^{2}x_{4}^{2}x_{5}x_{6}, x_{1}x_{3}x_{5}x_{6}, x_{1}x_{3}^{3}x_{4}^{2}, x_{1}^{2}x_{2}x_{5}x_{6}, x_{1}^{3}x_{2}x_{3}, x_{1}^{3}x_{2}^{2},x_{3}^{4}x_{4}^{3},x_{5}x_{6}^{2})$\\
We can see that $x_{1}^{2}x_{2}x_{5}x_{6}, x_{3}^{2}x_{4}^{2}x_{5}x_{6}\in G(\bar{I})$. Then $\sigma_{1}=\{x_{1}^{2}x_{3}^{2},x_{3}^{4}x_{4}^{3},x_{5}x_{6}^{2}\}$ and $\sigma_{2}=\{x_{1}^{3}x_{2}^{2},x_{5}x_{6}^{2},x_{1}^{2}x_{3}^{2}\}$ are the dominant subsets of $G(I)$ and the maximum degree of their LCMs is the regularity of the ideal. Since $x_{1}^{2}x_{2}x_{5}x_{6}|\operatorname{lcm}(\sigma)$ and  $x_{1}^{2}x_{2}x_{5}x_{6}|\operatorname{lcm}(\sigma\backslash\{x_{1}^{2}x_{3}^{2}\})$, then $\sigma$ and $\sigma\backslash\{x_{1}^{2}x_{3}^{2}\}$ do not appear in the minimal free resolution of $\bar{I}$. If a subset $\tau$ of $G(\bar{I})$ with $|\tau|\geq 3$ and containing $x_{3}^{4}x_{4}^{3}$ appears in the minimal free resolution of $\bar{I}$. Then $x_{5}x_{6}^{2}\notin \tau$. Therefore, $\operatorname{lcm}(\tau)\leq \operatorname{lcm}(\sigma)$. For example, if $\tau=\{x_{1}^{2}x_{3}^{2}, x_{3}^{2}x_{4}^{2}x_{5}x_{6}, x_{1}x_{3}x_{5}x_{6}, x_{1}x_{3}^{3}x_{4}^{2}, x_{1}^{2}x_{2}x_{5}x_{6}, x_{1}^{3}x_{2}x_{3},x_{3}^{4}x_{4}^{3}\}$, then $\operatorname{lcm}(\tau)=
x_{1}^{3}x_{2}x_{3}^{4}x_{4}^{3}x_{5}x_{6}$. Then $\operatorname{deg}[\operatorname{lcm}(\tau)]=13\leq 14=\operatorname{deg}[\operatorname{lcm}(\sigma_{1})] $. The same is true for $\sigma_{2}$. Hence, it satisfies the conditions of Remark \ref{remark1.13}. Then $\operatorname{reg}(S/\bar{I})\leq \operatorname{reg}(S/I)$. The calculation of the regularities by \textit{Macaulay2} is $\operatorname{reg}(\bar{I})=8\leq 10=\operatorname{reg}(I)$.
\end{example}

In the following theorem, we give an affirmative answer to Conjecture \ref{conj} for dominant monomial ideals.

\begin{theorem}\label{conjecture for Dominant}
Suppose that $I$ is a dominant monomial ideal in the polynomial ring $R=k[x_{1},\ldots,x_{n}]$. Then $\operatorname{reg}(\bar{I})\leq \operatorname{reg}(I)$.
\end{theorem}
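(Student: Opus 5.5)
The plan is to compare multidegrees, not to find a minimal resolution of $\bar I$. Write $G(I)=\{u_1,\ldots,u_q\}$ and $L=\operatorname{lcm}(u_1,\ldots,u_q)=x_1^{a_1}\cdots x_n^{a_n}$ with $a_i=\max\{\deg_{x_i}u: u\in G(I)\}$. By (\ref{regularity dominant}) we know $\operatorname{reg}(R/I)=\deg L-q$, so it is enough to prove
\[
\operatorname{reg}(R/\bar I)\le \deg L-q .
\]

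\textbf{Step 1 (shape of $G(\bar I)$).}
\begin{itemize}
\item Every generator of $\bar I$ has the form $x^{\lceil a\rceil}$ with $a\in\operatorname{conv}(a_1,\ldots,a_q)$. Each coordinate of such an $a$ is at most the corresponding $a_i$, so every element of $G(\bar I)$ divides $L$. Hence $\operatorname{lcm}(G(\bar I))=L$.
\item Dominance of $u_k$ at some variable $x_{j_k}$ makes $a_{u_k}$ a vertex of $NP(I)$ that no other monomial of $\bar I$ divides. So, as in Remark \ref{remark1.5}, $u_1,\ldots,u_q\in G(\bar I)$.
\item Every multidegree $b$ with $\beta_{i,b}(R/\bar I)\neq 0$ is the lcm of some subset of $G(\bar I)$, so $b\le L$ componentwise.
\end{itemize}

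\textbf{Step 2 (reduction to one multidegree).} It remains to show that for every $b$ with $\beta_{i,b}(R/\bar I)\ne0$ we have $|b|-i\le |L|-q$, that is,
\[
i\le q-(|L|-|b|).
\]
For this I use the restriction principle: $\beta_{i,b}(R/\bar I)=\beta_{i,b}(R/\bar I_{\le b})$, where $\bar I_{\le b}$ is generated by the elements of $G(\bar I)$ dividing $x^b$. It therefore suffices to show that $\bar I_{\le b}$ has projective dimension (or at least top Betti number in degree $b$) at most $q-(|L|-|b|)$.

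\textbf{Step 3 (counting).}
\begin{itemize}
\item Let $D=\{k: b_{j_k}=a_{j_k}\}$ be the set of dominant positions that are "full" in $b$.
\item For each $k\notin D$ we have $b_{j_k}<a_{j_k}$. The dominant variables $x_{j_k}$ are distinct, so $|L|-|b|\ge q-|D|$, i.e. $q-(|L|-|b|)\le |D|$.
\end{itemize}
So the goal becomes $i\le |D|$ whenever $\beta_{i,b}\ne 0$. More precisely, I would aim to show that the variables $x_{j_k}$ with $k\notin D$, together with the deficits in non-dominant coordinates, can be stripped off without raising homological degree. This should leave an ideal whose resolution at multidegree $b$ is governed by at most $|D|$ "independent directions", using Hochster's formula (or the Lyubeznik resolution, Theorem \ref{lyubznik}) applied to the lcm-lattice interval below $b$.

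\textbf{Step 4 (main obstacle).} The hard step is this last inequality. A generator of $\bar I$ other than $u_k$ can still reach exponent $a_{j_k}$ in $x_{j_k}$ because of the ceiling. So $D$ does not simply index the $u_k$ that divide $x^b$, and the lcm-lattice of $\bar I_{\le b}$ is not a Boolean lattice on $D$.

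\begin{itemize}
\item First attempt: order $G(\bar I)$ with $u_1,\ldots,u_q$ first in the Lyubeznik ordering. Then show that any Lyubeznik face $\sigma$ with $\operatorname{lcm}(\sigma)=x^b$ satisfies
\[
\deg\operatorname{lcm}(\sigma)-|\sigma|\le |L|-q .
\]
The argument would be a convex-combination exchange in the style of Lemma \ref{lemma1.5}: each extra generator $v\in\sigma$ not among the $u_k$ should force a drop of at least one in some coordinate of $b$ relative to $L$.
\item Fallback: prove the inequality by induction on the number of variables via polarization. The base case is the $\mathfrak m$-primary case, where the conjecture is known from \cite{R3}.
\end{itemize}
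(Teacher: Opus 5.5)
\textbf{There is a genuine gap.} Your reduction has the key inequality backwards. The condition $|b|-i\le |L|-q$ is equivalent to $i\ge q-(|L|-|b|)$. What you need is therefore a \emph{lower} bound on the homological degree in which a multidegree close to $L$ can occur. Bounding the projective dimension of $\bar I_{\le b}$ is beside the point. Your target ``$i\le |D|$'' is both the wrong direction and false: for $I=(x^2,y^2)$ one has $\bar I=(x^2,xy,y^2)$ and $\beta_{2,(2,1)}(R/\bar I)\neq 0$, while $D=\{1\}$.

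With the correct orientation, your count $|L|-|b|\ge q-|D|$ shows it would suffice to prove $i\ge |D|$. That is, a subset $\sigma\subseteq G(\bar I)$ of size $i$ supporting a nonzero Betti number should have $\operatorname{lcm}(\sigma)$ full in at most $i$ dominant variables. This is essentially the paper's route:
\begin{enumerate}
\item From the Taylor complex, $\beta_{i,j}(R/\bar I)\neq 0$ gives such a $\sigma$ with $\deg\operatorname{lcm}(\sigma)=j$.
\item Since every element of $G(\bar I)$ divides $L$, the case $i\ge q$ is immediate.
\item The paper discards $i<\operatorname{ht}(\bar I)$ via \cite[Remark 2.3]{R3}.
\item For the remaining $i<q$ it asserts that $\sigma$ ``misses at least $q-i$ dominant variables'', whence $j\le |L|-(q-i)$.
\end{enumerate}

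The decisive estimate, your Step 4, is never proved, and your ceiling worry is justified: even the corrected count $i\ge|D|$ fails. Take $I=(zy_1s^2,\,zy_2t^2,\,zs^2t^2w)$, a dominant ideal of height one whose dominant variables are exactly $y_1,y_2,w$. The monomial $v=zy_1y_2st$ is a minimal generator of $\bar I$: its square is $y_1y_2u_1u_2$, and no proper divisor has exponent vector in $NP(I)$. So $\sigma=\{v\}$ has $i=1\ge\operatorname{ht}(\bar I)$ but $|D|=2$. The required inequality $5-1\le 8-3$ holds only because of the deficits in the non-dominant variables $s,t$. Hence neither the corrected count nor the paper's one-line assertion holds as stated. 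A real argument must also control the non-dominant coordinates of $\operatorname{lcm}(\sigma)$, and your proposal offers none.

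Two auxiliary claims are also wrong.
\begin{itemize}
\item In Step 1, $u_1,\dots,u_q$ need not lie in $G(\bar I)$. For the dominant ideal $(x^3y^2z^2,y^4,z^4)$ one has $y^2z^2\in\bar I$ (indeed $\bar I=(y,z)^4$), so $a_{u_1}$ is not a vertex of $NP(I)$. Your Lyubeznik ordering with $u_1,\dots,u_q$ first is then unavailable. Only ``every $v\in G(\bar I)$ divides $L$'' survives, which is all the lcm bound needs.
\item The polarization fallback fails. Polarization does not commute with integral closure, since polarizations are squarefree and hence integrally closed. It also raises rather than lowers the number of variables.
\end{itemize}
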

\begin{proof}
Let $I=\left(u_1, \ldots, u_q\right)$ be a dominant monomial ideal, $d_{l k}=\operatorname{deg}_{x_l}\left(u_k\right)$, for $l=1, \ldots, n$ and $k=1, \ldots, q$. From \cite[Theorem 4.8]{R1}, we have $\operatorname{reg}(R / I)=\operatorname{deg}\left(\operatorname{lcm}\left(u_1, \ldots, u_q\right)\right)-q$. If $h t(I)=h t(\bar{I})=c \leq q$, then from \cite[Remark 2.3]{R3} we obtain $\operatorname{reg}(R / \bar{I})=\max \{j- \left.i \mid \beta_{i j}(R / \bar{I}) \neq 0, i \geq c\right\}$. If $x^b$ is a monomial generator of $\bar{I}$, where $b=\left(b_1, \ldots, b_n\right)$. Then from \cite[Proposition 12.1.2]{R4}, we obtain $b \in \left[\operatorname{conv}\left(\left(d_{11}, \ldots, d_{n 1}\right), \ldots,\left(d_{1 q}, \ldots, d_{n q}\right)\right)+[0,1)^{n}\right]\cap \mathbb{N}^{n}$, this implies that $b_l \leq \max \left\{d_{l k} \mid k=\right. 1, \ldots, q\}$, for every $l=1, \ldots, n$. This means that $\operatorname{deg}\left(\operatorname{lcm}\left(x^b \mid x^b \in G(\bar{I})\right) \leq\right. \operatorname{deg}\left(\operatorname{lcm}\left(u_1, \ldots, u_q\right)\right)$. Now, suppose that $\beta_{i j}(R / \bar{I}) \neq 0$, for some $i \geq c$. Then there exists a subset $\sigma \subset G(\bar{I})$ with $|\sigma|=i$ and $\operatorname{lcm}(\sigma)=j$. If $i<q$, then $\sigma$ misses at least $q-i$ the dominant generators. This implies that $\sigma$ misses at least $q-i$ dominant variables. That means $j=\operatorname{lcm}(\sigma) \leq \operatorname{deg}\left(\operatorname{lcm}\left(x^b \mid x^b \in G(\bar{I})\right)-(q-i)\right.$, then 
$$\begin{gathered}
j-i \leq \operatorname{deg}\left(\operatorname{lcm}\left(x^b \mid x^b \in G(\bar{I})\right)-(q-i)-i=\operatorname{deg}\left(\operatorname{lcm}\left(x^b \mid x^b \in G(\bar{I})\right)-q\right.\right. \\
\leq \operatorname{deg}\left(\operatorname{lcm}\left(u_1, \ldots, u_q\right)\right)-q=\operatorname{reg}(R / I).
\end{gathered}$$
If $i \geq q$, then from \cite[Remark 2.2]{R3}, we have 
$$j \leq \operatorname{deg}\left(\operatorname{lcm}\left(x^b \mid x^b \in G(\bar{I})\right) \leq\right. \operatorname{deg}\left(\operatorname{lcm}\left(u_1, \ldots, u_q\right)\right).$$
Then $j-i \leq \operatorname{deg}\left(\operatorname{lcm}\left(u_1, \ldots, u_q\right)\right)-q=\operatorname{reg}(R / I)$. Hence, $\operatorname{reg}(R / \bar{I}) \leq \operatorname{reg}(R / I)$.

\end{proof}

Based on Theorem \ref{th1.14} and Theorem \ref{conjecture for Dominant}, the following two propositions establish that Conjecture \ref{conj} holds for certain types of almost complete intersection monomial ideals.

\begin{proposition}
Suppose that $I$ is an almost complete intersection monomial ideal. If $I$ satisfies Remark \ref{remark1.13}, then $\operatorname{reg}(\bar{I})\leq \operatorname{reg}(I)$.
\end{proposition}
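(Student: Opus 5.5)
The plan is to split according to the dichotomy of Remark \ref{remark1.4}: an almost complete intersection monomial ideal is either dominant or falls into the second case.

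\emph{Dominant case.} If $I$ is dominant, the statement is exactly Theorem \ref{conjecture for Dominant}, and nothing further is needed.

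\emph{Second case.} Here $u_{q+1}\mid \operatorname{lcm}(u_1,\ldots,u_s)$ for some $2\le s\le q$, minimally, so Theorem \ref{th1.14} applies under the hypotheses of Remark \ref{remark1.13}. It gives
\[
\operatorname{reg}(R/\bar I)\le \max_{1\le i\le s}\Bigl\{\deg\operatorname{lcm}(u_1,\ldots,u_{i-1},u_{q+1},u_{i+1},\ldots,u_q)-q\Bigr\}.
\]
The task is then to recognise the right-hand side as $\operatorname{reg}(R/I)$. For this I would reuse the computation in the proof of Theorem \ref{th1.3}. Equation (\ref{eq5}) identifies each term as
\[
\sum_{l=1}^q\deg(u_l)-\deg\!\left(\frac{u_i}{\gcd(u_i,u_{q+1})}\right)-q.
\]
Those same proof lines show that the maximum of these quantities over $i=1,\ldots,s$ equals $\operatorname{reg}(R/I)$. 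This is the key point: the maximum over the dominant $q$-subsets containing $u_{q+1}$ in (\ref{eq3}) is attained exactly on these sets. Hence $\operatorname{reg}(R/\bar I)\le\operatorname{reg}(R/I)$.

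\emph{Passing to ideals.} Finally, use $\operatorname{reg}(J)=\operatorname{reg}(R/J)+1$ for both $J=I$ and $J=\bar I$ to conclude $\operatorname{reg}(\bar I)\le \operatorname{reg}(I)$.

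\emph{Main obstacle.} The delicate step is checking that the maximum in (\ref{eq3}) for $I$ is attained at the $q$-element sets obtained by replacing some $u_i$, $i\le s$, with $u_{q+1}$. Two things must be verified. First, the sets $\{u_1,\ldots,\widehat{u_i},\ldots,u_q,u_{q+1}\}$ are dominant: $u_{q+1}$ needs a variable in which it beats the others. It does, because $u_{q+1}\nmid\operatorname{lcm}$ of the remaining $u_j$, $j\le s$, $j\neq i$, and the $u_j$ have pairwise disjoint supports. Second, smaller dominant sets give no larger value of $\deg\operatorname{lcm}(\sigma)-|\sigma|$. The proof of Theorem \ref{th1.3} asserts this; I would justify it by noting that adjoining a further $u_j$, $j>s$ or $j\le s$ not already used, keeps dominance and raises $\deg\operatorname{lcm}$ by at least one. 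Once both points are in place, the bound from Theorem \ref{th1.14} matches $\operatorname{reg}(R/I)$ termwise.
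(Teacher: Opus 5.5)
Your proposal is correct and matches the paper's proof. The dominant case is Theorem \ref{conjecture for Dominant}; in the second case, the bound of Theorem \ref{th1.14} is identified with $\operatorname{reg}(R/I)$ via the computation in the proof of Theorem \ref{th1.3}, and $\operatorname{reg}(J)=\operatorname{reg}(R/J)+1$ then passes to ideals. Your check that the maximum in (\ref{eq3}) is attained on the sets $\{u_1,\ldots,\widehat{u_i},\ldots,u_q,u_{q+1}\}$, which the paper only asserts, is sound provided you adjoin elements only within a fixed such set containing $\tau$ and note that each $u_l$ keeps a dominant variable there because $u_l\nmid u_{q+1}$ by minimality of $G(I)$.
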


\begin{proof}
Suppose $I=(u_{1},\ldots,u_{q},u_{q+1})$ with $u_{1},\ldots,u_{q}$ is a regular sequence. If $u_{q+1}$ does not divide the least common multiple of any subset $\sigma$ of $G(I)$, then $I$ is a dominant monomial ideal and by Theorem \ref{conjecture for Dominant} we have the result. Now, suppose that there exists $2\leq s \leq q$ such that $u_{q+1}|\operatorname{lcm}(u_{1},\ldots,u_{s})$. Thus, from Theorem \ref{th1.3} we have $\operatorname{reg}(S/I)=\max\{\operatorname{deg}(\operatorname{lcm}(u_{1},\ldots,u_{k-1},u_{q+1},u_{k+1},\ldots,u_{s},\ldots,u_{q}))| k=1,\ldots,s\}-q$. According to Theorem \ref{th1.14}, we have 
\begin{eqnarray}
&&\operatorname{reg}(R/\bar{I})\leq\nonumber\\&&\max\{\operatorname{deg}(\operatorname{lcm}(u_{1},...,u_{k-1},u_{q+1},u_{k+1},\ldots,u_{s},\ldots,u_{q}))| k=1,\ldots,s\}-q=\operatorname{reg}(R/I).\nonumber
\end{eqnarray}
This completes the proof.
\end{proof}

\textbf{Conflict of interest.}\\ The author declares that there is no conflict of interests.



\begin{thebibliography}{99}
\bibitem{R1} G. Alesandroni, \textit{Minimal resolutions of dominant semidominant ideals}, J. Pure and Appl. Algebra, \textbf{221}(2017), 780-798.

\bibitem{R16} D. Bayer,  I. Peeva and B Sturmfels, \textit{Monomial resolutions}, Math. Res. Lett., \textbf{5}(1998), 31-46.

\bibitem{R11} Y. Cui, C. Gong and G. Zhu, \textit{The regularity of equigenerated monomial ideals and their integral closures}, arXiv:2509.15119, (2025).

\bibitem{R9} D. R. Grayson and M. E. Stillman, \textit{Macaulay 2, a software system for research in algebraic geometry}, Available at http://www.math.uiuc.edu/Macaulay2/.

\bibitem{R14} E. Guardo and A. Van Tuyl, \textit{Powers of complete intersections: graded Betti numbers and applications}, Ill. J. Math., \textbf{49}(2005), 265-279.

\bibitem{R15} C. Huneke and I. Swanson,  \textit{Integral closure of ideals, rings, and modules}, \textbf{13}, Cambridge University Press, (2006).

\bibitem{R3} O. Javadekar, \textit{A comparison of the regularity of certain classes of monomial ideals and their integral closure}, Arch. Math., \textbf{126}(2026), 1-13.

\bibitem{R10} A. Küronya and N. Pintye, \textit{Castelnuovo--Mumford Regularity and Log-canonical Thresholds}, arXiv:1312.7778, (2013).

\bibitem{R7} G. Lyubeznik, \textit{A new explicit finite free resolution of ideals generated by monomials in an R-sequence}, J. Pure and Appl. Algebra, \textbf{51}(1988), 193-195.

\bibitem{R6} A. Mafi and R. R. Qadir, \textit{Betti numbers and almost complete intersection monomial ideals}, arXiv:2505.18788, (2025).

\bibitem{R13} M. Mandal and S. Priya,  \textit{Bounds on the Castelnuovo-Mumford Regularity in dimension two}, arXiv:2404.01684, (2024).

\bibitem{R8} J. Mermin, \textit{Three simplicial resolutions}, arXiv:1102.5062, (2011).

\bibitem{R2} S. Misra, \textit{A counterexample to a conjecture of Küronya and Pintye on regularity and integral closure}, arXiv:2605.13879.

\bibitem{R5} I. Peeva, \textit{Graded syzygies},  \textbf{14}, Springer Science \& Business Media, (2010).

\bibitem{R12} M. E. Rossi,  D. T. Trung and N. V. Trung, \textit{Castelnuovo–Mumford regularity and Ratliff–Rush closure}, J. Algebra, \textbf{504}(2018), 568-586.

\bibitem{R17} H. Scarf, \textit{The Computation of Economic Equilibria, Cowles Foundation Monograph}, \textbf{24}, Yale University Press, (1973).

\bibitem{R4} R. H. Villarreal, \textit{Monomial Algebras, Monographs and Research Notes in Mathematics}, Chapman and Hall/CRC, (2015).

\end{thebibliography}
\end{document}